\documentclass[a4paper,twoside]{article}
\usepackage{a4}
\usepackage{amssymb}
\usepackage{amsmath}
\usepackage{upref}
\usepackage{xypic}
\usepackage[dvipsnames]{color}
%
%       Here we change the behaviour of the reference list
%       we want ref like 1. instead of [1].
%
\makeatletter
\def\@biblabel#1{#1.}
\makeatother
%
%       Here we remove the setting of headlines by thebibliography,
%       so that it remains author/title.
%
%       We also change the vertical spacing.
%
\makeatletter
\let\Thebibliography=\thebibliography
\renewcommand{\thebibliography}[1]{\def\@mkboth##1##2{}\Thebibliography{#1}
\addcontentsline{toc}{section}{References}
\frenchspacing % Maybe not needed
% Deleting extra vertical space
\setlength{\@topsep}{0pt}% Delete if extra space before list
\setlength{\itemsep}{0pt}%
\setlength{\parskip}{0pt plus 2pt}%
}
\makeatother
%
%       Here we change the behaviour of \ldots and \cdots to be
%       the same as in Arkiv f\"or matematik
%
\makeatletter
\def\mdots@{\mathinner.\nonscript\!.%
 \ifx\next,.\else\ifx\next;.\else\ifx\next..\else
 \nonscript\!\mathinner.\fi\fi\fi}
\let\ldots\mdots@
\let\cdots\mdots@
\let\dotso\mdots@
\let\dotsb\mdots@
\let\dotsm\mdots@
\let\dotsc\mdots@
\def\vdots{\vbox{\baselineskip2.8\p@ \lineskiplimit\z@
    \kern6\p@\hbox{.}\hbox{.}\hbox{.}\kern3\p@}}
\def\ddots{\mathinner{\mkern1mu\raise8.6\p@\vbox{\kern7\p@\hbox{.}}%
    \raise5.8\p@\hbox{.}\raise3\p@\hbox{.}\mkern1mu}}
\makeatother
%
%       Here we add points to sections etc.
%       That is, 'Section 2.3.' instead of only 'Section 2.3'
%
\makeatletter
\def\@seccntformat#1{\csname the#1\endcsname.\quad}
\makeatother
%
%	Boldmath in headlines
%	Below we modify the code so that math formulas in
%	chapter heads, section heads etc become bold.
%
%	Original code copied from book.cls 2007-06-28
%	Modified by Anders Bj\"orn 2007-06-28
%
\makeatletter
\renewcommand\section{\@startsection {section}{1}{\z@}%
                                   {-3.5ex \@plus -1ex \@minus -.2ex}%
                                   {2.3ex \@plus.2ex}%
                                   {\normalfont\Large\bfseries\boldmath}}
\renewcommand\subsection{\@startsection{subsection}{2}{\z@}%
                                     {-3.25ex\@plus -1ex \@minus -.2ex}%
                                     {1.5ex \@plus .2ex}%
                                     {\normalfont\large\bfseries\boldmath}}
\renewcommand\subsubsection{\@startsection{subsubsection}{3}{\z@}%
                                     {-3.25ex\@plus -1ex \@minus -.2ex}%
                                     {1.5ex \@plus .2ex}%
                                     {\normalfont\normalsize\bfseries\boldmath}}
\renewcommand\paragraph{\@startsection{paragraph}{4}{\z@}%
                                    {3.25ex \@plus1ex \@minus.2ex}%
                                    {-1em}%
                                    {\normalfont\normalsize\bfseries\boldmath}}
\renewcommand\subparagraph{\@startsection{subparagraph}{5}{\parindent}%
                                       {3.25ex \@plus1ex \@minus .2ex}%
                                       {-1em}%
                                      {\normalfont\normalsize\bfseries\boldmath}}
\makeatother
%
%       Reference definitions
%
\newcommand{\art}[6]{{\sc #1, \rm #2, \it #3 \bf #4 \rm (#5), \mbox{#6}.}}
\newcommand{\book}[3]{{\sc #1, \it #2, \rm #3.}}
\newcommand{\AND}{{\rm and }}
\newcommand{\artin}[3]{{\sc #1, \rm #2,  in #3.}}
\newcommand{\artprep}[3]{{\sc #1, \rm #2, #3.}}

%
%
%       Theorems and other things numbered
%
\RequirePackage{amsthm}
\newtheoremstyle{descriptive}%
  {\topsep}   %{\medskipamount}          % Space above
  {\topsep}   %  {\medskipamount}          % Space below
  {\rmfamily} % Body font
  {}          % Indent
  {\bfseries} % Head font
  {.}         % Punctuation after thm head
  { }         % Space after thm head
  {}          % Thm head spec(?)
\newtheoremstyle{propositional}%
  {\topsep}   %  {\medskipamount}          % Space above
  {\topsep}   %  {\medskipamount}          % Space below
  {\itshape}  % Body font
  {}          % Indent
  {\bfseries} % Head font
  {.}         % Punctuation after thm head
  { }         % Space after thm head
  {}          % Thm head spec(?)
\newtheoremstyle{remarkstyle}%
  {\topsep}   %  {\medskipamount}          % Space above
  {\topsep}   %  {\medskipamount}          % Space below
  {\rmfamily}  % Body font
  {}          % Indent
  {\itshape} % Head font
  {.}         % Punctuation after thm head
  { }         % Space after thm head
  {}          % Thm head spec(?)
\theoremstyle{propositional}
\newtheorem{thm}{Theorem}[section]
\newtheorem{lem}[thm]{Lemma}

\newtheorem{cor}[thm]{Corollary}
\theoremstyle{descriptive}
\newtheorem{deff}[thm]{Definition}
\theoremstyle{remarkstyle}
% * may be inserted for nonnumbering
\newtheorem{remark}[thm]{Remark}
\newtheorem{example}[thm]{Example}
%
% Here we redfine the proof environment to get the same spacing
% before and after it as before and after theorems.
%
% Based on amsthm.sty 2.20, 2004/08/06
%
% By Anders Bj\"orn, 2004-03-18
%
% Recall that you can get a \qed wherever you want using \qedhere.
%
\makeatletter
\renewenvironment{proof}[1][\proofname]{\par
  \pushQED{\qed}%
  \normalfont
%\topsep6\p@\@plus6\p@\relax % Removed by Anders Bj\"orn
  \trivlist
  \item[\hskip\labelsep
        \itshape
    #1\@addpunct{.}]\ignorespaces
}{%
  \popQED\endtrivlist\@endpefalse
}
\makeatother

{\catcode`p =12 \catcode`t =12 \gdef\eeaa#1pt{#1}}      % Get slantfactor
\def\accentadjtext#1{\setbox0\hbox{$#1$}\kern   % Convert it with height
                \expandafter\eeaa\the\fontdimen1\textfont1 \ht0 }
\def\accentadjscript#1{\setbox0\hbox{$#1$}\kern % Convert it with height
                \expandafter\eeaa\the\fontdimen1\scriptfont1 \ht0 }
\def\accentadjscriptscript#1{\setbox0\hbox{$#1$}\kern   % Convert it with height
                \expandafter\eeaa\the\fontdimen1\scriptscriptfont1 \ht0 }
\def\accentadjtextback#1{\setbox0\hbox{$#1$}\kern       % Convert it with height
                -\expandafter\eeaa\the\fontdimen1\textfont1 \ht0 }
\def\accentadjscriptback#1{\setbox0\hbox{$#1$}\kern     % Convert it with height
                -\expandafter\eeaa\the\fontdimen1\scriptfont1 \ht0 }
\def\accentadjscriptscriptback#1{\setbox0\hbox{$#1$}\kern % Convert it with height
                -\expandafter\eeaa\the\fontdimen1\scriptscriptfont1 \ht0 }
\def\itoverline#1{{\mathsurround0pt\mathchoice
        {\rlap{$\accentadjtext{\displaystyle #1}
                \accentadjtext{\vrule height1.593pt}
                \overline{\phantom{\displaystyle #1}
                \accentadjtextback{\displaystyle #1}}$}{#1}}
        {\rlap{$\accentadjtext{\textstyle #1}
                \accentadjtext{\vrule height1.593pt}
                \overline{\phantom{\textstyle #1}
                \accentadjtextback{\textstyle #1}}$}{#1}}
        {\rlap{$\accentadjscript{\scriptstyle #1}
                \accentadjscript{\vrule height1.593pt}
                \overline{\phantom{\scriptstyle #1}
                \accentadjscriptback{\scriptstyle #1}}$}{#1}}
        {\rlap{$\accentadjscriptscript{\scriptscriptstyle #1}
                \accentadjscriptscript{\vrule height1.593pt}
                \overline{\phantom{\scriptscriptstyle #1}
                \accentadjscriptscriptback{\scriptscriptstyle #1}}$}{#1}}}}
%
%       log-like symbols and standard notation
%
\newcommand{\setm}{\setminus}
\newcommand{\Cp}{{C_p}}
\newcommand{\grad}{\nabla}
\DeclareMathOperator{\Div}{div}
\DeclareMathOperator{\cp}{cap}
\newcommand{\capp}{\cp_p}
\DeclareMathOperator{\BMO}{BMO}
\newcommand{\bdry}{\partial}
\newcommand{\loc}{_{\rm loc}}
% %
%       \limminus and \limplus to be used e.g in r --> 1-
%
%       I think that r --> 1- has a far to big - sign normally.
%       This definition makes it smaller and adjust the height.
%       In \scriptscriptstyle it still has the old size.
%
%
%       Last changed by Anders Bj\"orn 95 02 24
%       Extra outer braces {} inserted by Anders Bj\"orn 2003-11-15
%%%%%%%%%%%%%%%%%%%%%%%%%%%%%%%%%%%%%%%%%%%%%%%%%%%%%%%%%%%%%%%%%%%%%%%%%%%%
\newcommand{\limplus}{{\mathchoice{\raise.17ex\hbox{$\scriptstyle +$}}
                {\raise.17ex\hbox{$\scriptstyle +$}}
                {\raise.1ex\hbox{$\scriptscriptstyle +$}}
                {\scriptscriptstyle +}}}
%%
%%       Greek alphabet and abreviations
%%
\newcommand{\alp}{\alpha}
\newcommand{\A}{{\cal A}}
\newcommand{\al}{\alpha}
\newcommand{\albar}{\itoverline{\al}}
\newcommand{\be}{\beta}
\newcommand{\bebar}{\bar{\be}}
\newcommand{\de}{\delta}
\newcommand{\ga}{\gamma}
\newcommand{\Om}{\Omega}
\newcommand{\om}{\omega}
\renewcommand{\phi}{\varphi}
\newcommand{\eps}{\varepsilon}
\newcommand{\p}{{$p\mspace{1mu}$}}
 % \p for better spacing in constructions like p-something
\newcommand{\R}{\mathbf{R}}
\newcommand{\K}{\widetilde{\cal K}}
%%
%%       Spaces
%%
\newcommand{\Wp}{W^{1,p}}
\newcommand{\Np}{W^{1,p}}
%
%  \cprime used for the Russian soft sign
%
\def\cprime{{\mathsurround0pt$'$}}
%
% Here we make the equation numbering to be within sections
%
\numberwithin{equation}{section}
\newenvironment{ack}{\medskip{\it Acknowledgement.}}{}

\begin{document}

\author{Jana Bj\"orn}
\title{Sharp exponents and a Wiener type condition for boundary 
regularity of quasiminimizers}
\author{
Jana Bj\"orn \\
\it\small Department of Mathematics, Link\"oping University, \\
\it\small SE-581 83 Link\"oping, Sweden\/{\rm ;}
\it \small jana.bjorn@liu.se
}

\date{}
\maketitle

\noindent{\small {\bf Abstract}. 
We obtain a sufficient condition for boundary regularity of quasiminimizers
of the \p-energy integral in terms of a Wiener type sum of power type. 
The exponent in the sum is independent of the dimension and is
explicitly expressed in terms of $p$ and the quasiminimizing constant. 
We also show by an example that the exponent is sharp in a certain sense.

}

\bigskip
\noindent
{\small \emph{Key words and phrases}:
Boundary regularity, capacity, power function, quasiminimizer, 
quasiminimizing potential, 
regular point, Wiener criterion.
}

\medskip
\noindent
{\small Mathematics Subject Classification (2010):
Primary: 31C15, 35B45; Secondary: 31C45, 35J20, 49N60.
}

\section{Introduction}

Let $\Om\subset\R^n$ be a bounded open set and  
consider the  \emph{Dirichlet problem}
of finding a \p-harmonic function $u$, i.e.\  a solution of the \p-Laplace
equation 
\[
\Delta_pu:=\Div(|\grad u|^{p-2}\grad u)=0, \quad 1<p<\infty,
\] 
in $\Om$ with prescribed boundary values $f$.
Even if the boundary data are continuous, it cannot be guaranteed in general
that the solution attains its boundary values as limits  
\begin{equation}   \label{eq-def-regular}
\lim_{\Om\ni x\to x_0} u(x) = f(x_0)
\end{equation}
at all boundary points $x_0\in\bdry\Om$.
If \eqref{eq-def-regular} holds for every continuous $f$ then $x_0$ is
called \emph{regular}. 

The classical Wiener criterion asserts that the regularity of 
a boundary point $x_0\in\bdry\Om$ is equivalent to 
\begin{equation}
\int_0^1 \biggl( \frac{\capp(B(x_0,\rho)\setm\Om,B(x_0,2\rho))}
      {\rho^{n-p}}  
\biggr)^{1/(p-1)} \frac{d\rho}{\rho}=\infty
\label{eq-wiener-crit}
\end{equation}
where $\capp$ is the variational capacity and $B(x_0,\rho)$ denotes the ball
with centre $x_0$ and radius $\rho$.
The Wiener criterion was proved by Wiener~\cite{Wiener24} in 1924 in the linear case $p=2$ 
(i.e.\ for harmonic functions).
In the nonlinear case, for general $p>1$, the sufficiency part was obtained by 
Maz\cprime ya~\cite{Mazya70} in 1970
and it then took more than 20 years for the necessity part, due to
Kilpel\"ainen--Mal\'y~\cite{KilMa}, even though for 
$p=n$ it was obtained already by Lindqvist--Martio~\cite{LiMa85} in 1985.
Note that at those times it was not even clear that the exponent
should be $1/(p-1)$. 

It is well known that \p-harmonic functions are minimizers of the \p-energy,
i.e.\  that the above solution $u$ of the Dirichlet problem satisfies
\begin{equation}  \label{eq-def-minimizer-intro}
\int_\Om |\grad u|^p\,dx \le \int_\Om |\grad (u+\phi)|^p\,dx
\end{equation}
for all test functions $\phi\in C^\infty_0(\Om)$.
Solutions of more general equations, such as $\Div \A(x,\grad u)=0$
with $a_1 |\xi|^p \le \A(x,\xi)\cdot \xi \le a_2 |\xi|^p$,
do not satisfy~\eqref{eq-def-minimizer-intro} in this form 
but it can be verified that they are \emph{quasiminimizers}, 
namely that there is a constant $Q\ge1$ such that
\begin{equation}  \label{eq-def-q-min-intro}
\int_{\phi\ne0} |\grad u|^p\,dx \le Q \int_{\phi\ne0} |\grad (u+\phi)|^p\,dx.
\end{equation}

Quasiminimizers were introduced by
Giaquinta and Giusti~\cite{GG1},~\cite{GG2} as a tool for a unified
treatment of variational integrals, elliptic equations and
quasiregular mappings on $\R^n$. 
They have since then been studied by various authors and it has turned out
that they share many (though not all) properties with \p-harmonic functions.
In particular, De Giorgi's method applies to quasiminimizers and shows that
they are locally H\"older continuous~\cite{GG2}.  
The Harnack inequality for quasiminimizers was proved by
DiBenedetto--Trudinger~\cite{DiBTru} and Mal\'y~\cite{Maly}, where also
the strong maximum principle was obtained.

Tolksdorf~\cite{tolksdorf} obtained a Caccioppoli inequality and
a convexity result for quasiminimizers.
Further regularity results in $\R^n$ can be found in 
Latvala~\cite{LatBMO}
and Kinnunen--Kotilainen--Latvala~\cite{KiKoLa}. 
An obstacle problem for quasiminimizers was considered by Ivert~\cite{Ivert}.
Recently, it was discovered that quasiminimizers include solutions 
of even larger classes of equations, such as Ricatti equations,
see Martio~\cite{MarRic1},~\cite{MarRic2}. 

Compared with
the theory of \p-harmonic functions there is no common  differential equation
for quasiminimizers to work with,
only the variational inequality can be used.  There is also no
comparison principle nor uniqueness for the Dirichlet problem.
On the other hand, quasiminimizers are more flexible than \p-harmonic
functions and are preserved by quasiregular mappings,
as shown by Korte--Marola--Shanmugalingam~\cite{KoMaSh}.
Potential theory for quasiminimizers was developed in 
Kinnunen--Martio~\cite{KiMa03}.

Unlike \p-harmonic functions, which on $\R$ reduce to linear functions,
quasiminimizers also have a rich 1-dimensional theory, as seen in
Martio--Sbordone~\cite{MaSb}, Judin~\cite{judin}, 
Martio~\cite{martioReflect},~\cite{Martio},
Uppman~\cite{uppman}, Bj\"orn--Bj\"orn~\cite{BB-power} 
and Bj\"orn--Bj\"orn--Korte~\cite{Riikka}.

Boundary regularity for quasiminimizers was studied by 
Ziemer~\cite{Ziem86} in $\R^n$ and by Bj\"orn~\cite{Bj02} in metric spaces,
where explicit pointwise estimates were also given.
A weak Kellogg property and several other qualitative results about boundary
regularity for quasiminimizers were obtained by 
A.~Bj\"orn~\cite{ABkellogg},~\cite{ABclass} and~\cite{ABcluster}.
It was also shown by A.~Bj\"orn--Martio~\cite[Theorem~6.2]{BjMa-paste}
that  regularity for quasiminimizers is a local property, i.e.\ that 
it only depends on the geometry of $\Om$ in a neighbourhood of $x_0$.
For \p-harmonic functions, this follows from the Wiener criterion, but
for quasiminimizers the exact form of a Wiener type condition is not known
and there is a substantial gap between the known sufficient and necessary
conditions.

During the last 15 years, quasiminimizers have also been studied on metric
measure spaces.
Interior regularity and qualitative properties of quasiminimizers 
in metric spaces have been
studied in \cite{ABremove},  \cite{Riikka}, \cite{BBM}, \cite{BMarola}, 
\cite{KiMaMa}, \cite{KiMa03} and \cite{KiSh01}. 
Various results on their boundary behaviour were obtained in 
\cite{ABkellogg}, \cite{ABclass}, \cite{ABcluster}, \cite{BjMa-paste},  
\cite{Bj02} and \cite{JBCalcVar}.

In Ziemer~\cite{Ziem86} and Bj\"orn~\cite{Bj02} 
it was shown that the divergence
 of certain integrals (or sums) similar 
to~\eqref{eq-wiener-crit} is sufficient for boundary regularity.
However, the integrand in those conditions is an exponential function
decaying much faster than the power in the classical Wiener 
criterion~\eqref{eq-wiener-crit}. 
This makes it more difficult for the sums to diverge and the conditions 
are therefore much more restrictive.
Nevertheless, they guarantee regularity e.g.\ if the complement of $\Om$
has a cork-screw or if $\Om$ is porous at $x_0$.

In a very recent preprint~\cite{DiBen-Gia} DiBenedetto and Gianazza
use weak  Harnack
inequalities near the boundary to obtain a sufficient condition of
power-type with some exponent, depending only on $p$, $Q$ and $n$, 
which 
is traceable through their calculations.

In this paper we obtain a sufficient condition
for boundary regularity of quasiminimizers,
which is more similar to the Wiener criterion 
and whose exponent is explicit and independent of $n$.
More precisely, we prove the following result.

\begin{thm}   \label{thm-regular-intro}
Let $\Om\subset\R^n$ be bounded and open, $x_0\in\bdry \Om$ and $Q>1$.
Assume that for some $\eps>0$
the Wiener type sum
\begin{equation}
\sum_{j=0}^{\infty} \biggl(
    \frac{\capp(B(x_0,2^{-j-1})\setm\Om,B(x_0,2^{-j}))}
      {2^{-j(n-p)}}    
\biggr)^{\albar/(p-1)+\eps} =\infty,
\label{eq-div-wiener-sum-intro}
\end{equation}
where $\albar\ge1$ is the unique solution in $ [1,\infty)$ of the equation
\begin{equation*}   
Q=\frac{\albar^p}{1+p(\albar-1)}.
\end{equation*}
Then $x_0$ is a regular boundary point for $Q$-quasiminimizers.
\end{thm}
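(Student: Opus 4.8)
\emph{Proof idea.}
The plan is to run Maz\cprime ya's energy scheme for the sufficiency part of the Wiener criterion~\cite{Mazya70}, with the \p-capacitary potential replaced by a \emph{quasiminimizing potential} and with every use of the comparison principle replaced by a test-function estimate, since quasiminimizers obey neither a comparison nor a minimum principle. The exponent $\albar$ will enter through the sharp growth rate of $Q$-quasiminimizers quantified by the power-type quasiminimizers of Bj\"orn--Bj\"orn~\cite{BB-power}: the power function $t\mapsto t^{\albar}$ has best quasiminimizing constant $\albar^p/(1+p(\albar-1))=Q$, and this profile is exactly the one realising (asymptotic) equality in the recursion obtained below; the extra $\eps$ absorbs the non-sharp constants coming from radial symmetrisation and from passing between the one-dimensional model and $\R^n$. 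Throughout write $B_j=B(x_0,2^{-j})$, $F_j=B_{j+1}\setm\Om$ and
\[
   \om_j=\frac{\capp(F_j,B_j)}{2^{-j(n-p)}},
\]
so that $\om_j^{\,\albar/(p-1)+\eps}$ is precisely the $j$-th term of the sum in~\eqref{eq-div-wiener-sum-intro}.

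\emph{Step 1 (reduction).} Using the potential theory for quasiminimizers of Kinnunen--Martio~\cite{KiMa03} (solvability of the Dirichlet and obstacle problems, the weak Harnack inequality of DiBenedetto--Trudinger~\cite{DiBTru} and Mal\'y~\cite{Maly}) together with the boundary-regularity framework of Bj\"orn~\cite{Bj02}, it suffices to prove the following quantitative decay estimate: if $u$ is a bounded $Q$-quasisubminimizer in $\Om\cap B_0$ with $u\in\Wp(B_0)$, $0\le u\le M$, and $u=0$ quasieverywhere on $B_0\setm\Om$, then $a_j:=\operatorname*{ess\,sup}_{\Om\cap B_j}u\to0$ as $j\to\infty$. (A continuous boundary datum $f$ is reduced to this after subtracting $f(x_0)$, splitting into positive and negative parts and truncating, using that the two one-sided suprema control the oscillation at $x_0$.) Note that $(a_j)$ is nonincreasing.

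\emph{Step 2 (the fundamental inequality).} The heart of the argument is to show that there are $c>0$ and an integer $N\ge1$, depending only on $p$, $n$ and $Q$, with
\[
   a_{j+N}\le a_j\bigl(1-c\,\om_j^{\,\albar/(p-1)+\eps}\bigr)\qquad\text{for every }j.
\]
To prove this I would fix $j$, set $F=F_j$, and introduce the quasiminimizing potential $w$ of $F$ in $B_j$: the solution of the obstacle problem in $B_j$ with obstacle $\chi_F$ and zero boundary data, which is a $Q$-quasiminimizer in $B_j\setm F$ with $0\le w\le1$, $w=1$ q.e.\ on $F$ and $w\in\Wp_0(B_j)$. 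Testing the quasiminimality of $w$ against the \p-capacitary potential of $(F,B_j)$ gives the energy bound $\int_{B_j}|\grad w|^p\,dx\le Q\capp(F,B_j)$. Next, testing the quasisubminimality of $u$ with the admissible competitor $u+\phi$, where $\phi=\min\{u,\,M(1-w)\}-u\le0$ vanishes on $F$ and near $\bdry B_j$, yields $\int_{\{u>M(1-w)\}}|\grad u|^p\,dx\le QM^p\int_{B_j}|\grad w|^p\,dx\le Q^2M^p\capp(F,B_j)$. From here, a De Giorgi type iteration on the levels of $u$ inside $B_{j+N}$ — fed by the capacitary Sobolev inequality and the Caccioppoli inequality of Tolksdorf~\cite{tolksdorf}, and compared step by step with the one-dimensional power-function extremal of~\cite{BB-power} — produces $a_{j+N}\le a_j(1-c\,\om_j^{\,\albar/(p-1)+\eps})$; it is this iteration, whose quantitative efficiency is governed by $Q$, that turns the classical exponent $1/(p-1)$ into $\albar/(p-1)$ and forces the small loss $\eps$.

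\emph{Step 3 (iteration and conclusion).} Since $(a_j)$ is nonincreasing and $1-t\le e^{-t}$, iterating the fundamental inequality in steps of $N$ from the starting index $\ell\in\{0,1,\ldots,N-1\}$ gives
\[
   a_{\ell+Nk}\le a_\ell\exp\Bigl(-c\sum_{m=0}^{k-1}\om_{\ell+Nm}^{\,\albar/(p-1)+\eps}\Bigr).
\]
As $\sum_j\om_j^{\,\albar/(p-1)+\eps}=\infty$ by hypothesis, some starting index $\ell$ has $\sum_m\om_{\ell+Nm}^{\,\albar/(p-1)+\eps}=\infty$, so $a_{\ell+Nk}\to0$ along it, and by monotonicity $a_j\to0$; by Step 1, $x_0$ is regular. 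The step I expect to be the main obstacle is the iteration in Step 2, i.e.\ extracting the sharp exponent $\albar/(p-1)$: without a comparison or minimum principle the decay must be squeezed out of energy and Caccioppoli estimates alone, and it is precisely the quasiminimizing constant $Q$ — via the identity $Q=\albar^p/(1+p(\albar-1))$ and the power-type quasiminimizers of~\cite{BB-power} — that dictates the replacement of $1/(p-1)$ by $\albar/(p-1)$ and the loss $\eps$.
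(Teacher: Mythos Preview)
Your overall architecture---reduce to decay of suprema, prove a one-step decay inequality, iterate via $1-t\le e^{-t}$---matches the paper's, and Steps~1 and~3 are fine (modulo the aside that quasiminimizers \emph{do} satisfy the maximum/minimum principle; only the comparison principle fails). The substantive divergence, and the gap, is in Step~2.

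The paper does \emph{not} extract the exponent $\albar/(p-1)$ from a De~Giorgi iteration or from Caccioppoli inequalities. Those tools are exactly what Ziemer~\cite{Ziem86} and Bj\"orn~\cite{Bj02} used earlier, and they only produce an exponentially decaying integrand, far weaker than a power. Instead the paper imports Martio's capacitary estimate for quasiminimizing potentials~\cite[Corollary~5.3]{Martio}, which rests on Gehring-type higher integrability rather than on De~Giorgi: for a $Q$-quasiminimizing potential $w$ of $K\subset\itoverline{B}$ in $3B$,
\[
\inf_{2B} w \ge c\Bigl(\frac{\capp(K,3B)}{r^{n-p}}\Bigr)^{1/\de},\qquad
\de=p-\frac{s}{s-1},\quad s<p_1\Bigl(\frac{p}{p-1},Q^{1/p}\Bigr).
\]
This is then iterated scale by scale, using at each step the upper obstacle problem (Lemma~\ref{lem-Olli-obst-pr}) and the pasting lemma (Lemma~\ref{lem-extend}) from~\cite{BjMa-paste} to manufacture a new quasiminimizing potential lying below the previous one; this device is what replaces the missing comparison principle and yields the one-step decay and the exponential sum. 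Only afterwards, by a purely algebraic computation, is the admissible range of $1/\de$ identified with $(\albar/(p-1),\infty)$: one solves for $p_1$ in terms of an auxiliary exponent $\be$ associated with $Q^{1/(p-1)}$ and $p'=p/(p-1)$, and then maps $\be$ to $\albar$ via the explicit bijection $\be=\albar/(1+p(\albar-1))$. The power-type quasiminimizers of~\cite{BB-power} enter only here, to recognise $p_1$ a~posteriori; they are not used as barriers or comparison profiles inside any iteration.

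Your proposed route---feed a De~Giorgi level-set iteration with the energy bound $\int_{\{u>M(1-w)\}}|\grad u|^p\,dx\le Q^2M^p\capp(F,B_j)$ and ``compare step by step with the one-dimensional power-function extremal''---does not, as written, exhibit a mechanism that turns this energy control into a \emph{power} of $\om_j$ with the exponent tied to $\albar$. The known De~Giorgi/Caccioppoli arguments give exactly the exponential-type conditions you are trying to improve upon, and you have not indicated where the extra input (higher integrability, or an equivalent) would come from. Unless you can supply that, Step~2 is a genuine gap rather than a technicality.
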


Note that $\albar\to1$ as $Q\to1$.
We also remark that the question of regularity is only interesting 
for $p\le n$, 
since for $p>n$ every point is regular (because of Sobolev embeddings)
and the Wiener type sum always diverges.

It is clear that the sum in~\eqref{eq-div-wiener-sum-intro} can equivalently 
be replaced by an integral. 
Since $\albar$ satisfies the estimate $Q^{1/(p-1)}\le\albar<(pQ)^{1/(p-1)}$,
our result in particular means that the explicit condition
\[
\int_0^1 \biggl( \frac{\capp(B(x_0,\rho)\setm\Om,B(x_0,2\rho))}
      {\rho^{n-p}}  
\biggr)^{\frac{(pQ)^{1/(p-1)}}{p-1}} \frac{d\rho}{\rho}=\infty
\]
guarantees the boundary regularity of $x_0$ for $Q$-quasiminimizers in $\Om$.
Note however that, unlike $\albar/(p-1)+\eps$ 
in~\eqref{eq-div-wiener-sum-intro},
the exponent $(pQ)^{1/(p-1)}/(p-1)$ does not have the correct asymptotics
as $Q\to1$.
We also show that in some sense the exponent $\albar/(p-1)+\eps$ is sharp,
possibly up to $\eps$.

Our proof is based on the capacitary estimates for quasiminimizing potentials
from Martio~\cite{Martio}.
Other important tools will be the obstacle problem and a pasting lemma from 
A.~Bj\"orn--Martio~\cite{BjMa-paste}.
It also uses the explicit examples of power-type
quasiminimizers from Judin~\cite{judin}, Martio~\cite{martioReflect}
and Bj\"orn--Bj\"orn~\cite{BB-power}, as well as
the optimal quasiminimizing constants $Q$ for the
powers $|x|^\al$ in one and several dimensions, obtained therein. 
As a byproduct of our investigations, we also show that there is a one-to-one 
correspondence between the powers $|x|^\al$ associated with $Q$ and $p$
and those associated with the ``dual'' constants $Q^{1/(p-1)}$ and $p/(p-1)$.

The outline of the paper is as follows.
In Section~\ref{sect-prelim} we recall some definitions 
and properties of quasiminimizers.
Theorem~\ref{thm-regular-intro} is proved in the subsequent sections.
First, we prove a preliminary version in Section~\ref{sect-Wiener-est},
which we simplify to the above form in Section~\ref{sect-simplify}.
Finally, Section~\ref{sect-sharpness} is devoted to 
demonstrating the sharpness of the obtained exponent.

\begin{ack}
The author has been supported by the Swedish Research Council.
She also thanks Olli Martio for sending her the preprint version
of~\cite{Martio} and for useful discussions.
\end{ack}

\section{Preliminaries and auxiliary results}
\label{sect-prelim}

Throughout the rest of the
paper we assume that $\Om\subset\R^n$ is a bounded open set 
and that $1<p\le n$ and $Q>1$ are fixed.

The following is one of several equivalent definitions 
of quasi(super/sub)mi\-ni\-m\-izers,
see A.~Bj\"orn~\cite{ABremove}.
Recall that the Sobolev space $\Wp(\Om)$ consists of all 
$L^p$-functions in $\Om$ with distributional gradients in $L^p$.
The space $\Wp_0(\Om)$ is the subspace of $\Wp(\Om)$ with zero boundary
values. 

\begin{deff}
A function $u\in\Wp\loc(\Om)$
is a $Q$\emph{-quasiminimizer} in $\Om$ if~\eqref{eq-def-q-min-intro} 
holds for all $\phi\in\Wp_0(\Om)$.
If $u\in\Wp\loc(\Om)$ and~\eqref{eq-def-q-min-intro} 
holds for all nonnegative (nonpositive) $\phi\in\Wp_0(\Om)$ then $u$
is a $Q$\emph{-quasisuper(sub)minimizer} in $\Om$.
\end{deff}

It was shown already by Giaquinta--Giusti~\cite{GG2}
that quasiminimizers (or rather their suitable representatives in 
$\Wp\loc$) are locally H\"older continuous.
Similarly, quasisuper(sub)minimizers can be shown to have lower (upper)
semicontinuous representatives.
We therefore throughout the paper consider only such representatives.

Quasiminimizers obey the maximum and minimum principles saying that
for every bounded open set $\Om'\subset\Om$ with $\overline{\Om'}\subset\Om$,
\[
\inf_{\Om'} u = \inf_{\bdry\Om'} \quad \text{and} \quad
\sup_{\Om'} u = \sup_{\bdry\Om'},
\]
see~\cite{GG2}.
On the other hand, by considering solutions of $\Div \A(x,\grad u)=0$
with different but comparable $\A$ it is easy to see that the comparison
principle, which is otherwise a useful tool for \p-harmonic functions, 
fails for quasiminimizers.
In other words, it can happen for two quasiminimizers that $u_1\le u_2$
holds on $\bdry\Om$ but fails in $\Om$.

One way of compensating for the lack of comparison principle for
quasiminimizers, is to use pasting lemmas as in
A.~Bj\"orn--Martio~\cite{BjMa-paste}.
The following pasting lemma for quasisuperminimizers
is a special case of Theorem~4.1 in~\cite{BjMa-paste}.

\begin{lem}        \label{lem-extend}
Let $u\in\Wp\loc(\Om)$ be a $Q$-quasisuperminimizer in $\Om\setm F$, where 
$F\subset\Om$ is relatively closed in $\Om$.
Assume that $u\le1$ in $\Om\setm F$ and $u=1$ on $F$.
Then $u$ is a $Q$-quasisuperminimizer in $\Om$.
\end{lem}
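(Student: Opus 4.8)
The plan is to verify the defining inequality \eqref{eq-def-q-min-intro} for $u$ in $\Om$ directly, by cutting an arbitrary admissible test function down to one supported in $\Om\setm F$, where the hypothesis on $u$ may be used. (This is also precisely the special case of \cite[Theorem~4.1]{BjMa-paste} obtained by pasting $u$ with the constant $1$, so invoking that theorem is a shorter alternative.)

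So fix a nonnegative $\phi\in\Wp_0(\Om)$; we may assume that the right-hand side of \eqref{eq-def-q-min-intro} is finite, for otherwise there is nothing to prove, and then $\int_{\{\phi\ne0\}}|\grad u|^p\,dx<\infty$ as well. Put
\[
\tilde\phi=\min\{\phi,(1-u)_+\}=\phi-(u+\phi-1)_+ ,
\]
so that $0\le\tilde\phi\le\phi$ and $u+\tilde\phi=\min\{u+\phi,1\}$. Since $u=1$ on $F$ and $\phi\ge0$, we get $\tilde\phi=0$ on $F$, hence $\{\tilde\phi\ne0\}\subset\Om\setm F$. I would first check that $\tilde\phi$ is admissible for $u$ in $\Om\setm F$, i.e.\ that $\tilde\phi\in\Wp_0(\Om\setm F)$ (and $\tilde\phi\ge0$, which is clear).

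Granting this, the conclusion follows from two elementary observations. On $\{\phi\ne0\}\setm\{\tilde\phi\ne0\}$ one has $(1-u)_+=0$, i.e.\ $u=1$, so $\grad u=0$ a.e.\ there and consequently
\[
\int_{\{\phi\ne0\}}|\grad u|^p\,dx=\int_{\{\tilde\phi\ne0\}}|\grad u|^p\,dx .
\]
Moreover, truncation at the level $1$ does not increase the gradient, so $|\grad(u+\tilde\phi)|\le|\grad(u+\phi)|$ a.e., while $\{\tilde\phi\ne0\}\subset\{\phi\ne0\}$ because $\tilde\phi\le\phi$. Applying \eqref{eq-def-q-min-intro} for $u$ in $\Om\setm F$ to the nonnegative test function $\tilde\phi$ and chaining these facts yields
\[
\int_{\{\phi\ne0\}}|\grad u|^p\,dx=\int_{\{\tilde\phi\ne0\}}|\grad u|^p\,dx\le Q\int_{\{\tilde\phi\ne0\}}|\grad(u+\tilde\phi)|^p\,dx\le Q\int_{\{\phi\ne0\}}|\grad(u+\phi)|^p\,dx ,
\]
which is \eqref{eq-def-q-min-intro} for $u$ in $\Om$.

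The main obstacle is the admissibility of $\tilde\phi$. For the Sobolev regularity one notes that $\grad\tilde\phi=\grad\phi$ on $\{\phi<(1-u)_+\}$ and $|\grad\tilde\phi|\le|\grad u|$ on the rest of $\{\tilde\phi\ne0\}\subset\{\phi\ne0\}$, so the finiteness reduction above gives $\tilde\phi\in\Wp(\Om)$, whence $\tilde\phi\in\Wp_0(\Om)$ since $0\le\tilde\phi\le\phi\in\Wp_0(\Om)$ by the lattice properties of $\Wp_0$. Upgrading this to membership in $\Wp_0(\Om\setm F)$ requires knowing that $\tilde\phi$ vanishes on $F$ not only a.e.\ but quasieverywhere, which in turn uses that the lower semicontinuous representative of the quasisuperminimizer $u$ is quasicontinuous, so that $u=1$, and hence $(1-u)_+=0$ and $\tilde\phi=0$, quasieverywhere on $F$; one then applies the standard characterization of $\Wp_0$ of an open set through quasieverywhere vanishing on the complement. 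Carrying out these capacity-theoretic points carefully is exactly what is done in \cite{BjMa-paste}, and it is the reason we simply quote \cite[Theorem~4.1]{BjMa-paste}.
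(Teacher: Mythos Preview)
Your proposal is correct and, in fact, does more than the paper: the paper gives no proof at all, merely stating that the lemma is a special case of \cite[Theorem~4.1]{BjMa-paste}. You both make that citation and sketch the underlying truncation argument $\tilde\phi=\min\{\phi,(1-u)_\limplus\}$, correctly identifying the one nontrivial point (that $\tilde\phi\in\Wp_0(\Om\setm F)$ requires the quasieverywhere vanishing on $F$, handled via quasicontinuity) and deferring it back to \cite{BjMa-paste}.
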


Here we use the refined Sobolev spaces as in 
Heinonen--Kilpel\"ainen--Martio~\cite[Section~4]{HeKiMa},
i.e.\ we consider only the quasicontinuous representatives of Sobolev functions,
which are well defined up to sets of \p-capacity zero.
This means that equalities such as $u=1$ for Sobolev functions are 
regarded as holding up to sets of \p-capacity zero.
Recall that the Sobolev \p-capacity is for a compact set $K\subset\R^n$
defined as
\[
\Cp(K)=\inf \int_{\R^n} (|\phi|^p + |\grad\phi|^p) \,dx,
\]
where the infimum is taken over all $\phi\in C^\infty_0(\R^n)$ such that
$\phi\ge1$ on $K$.
We shall also use the variational capacity $\capp$, which for a compact
set $K\subset \Om$ is defined as
\[
\capp(K,\Om)=\inf \int_{\Om} |\grad\phi|^p \,dx,
\]
where the infimum is taken over all $\phi\in C^\infty_0(\Om)$ such that
$\phi\ge1$ on $K$.

A function $u\in\Wp_0(\Om)$ is a 
$Q$\emph{-quasiminimizing potential} in $\Om$ for a relatively closed set
 $F\subset\Om$ if
it is a $Q$-quasiminimizer in $\Om\setm F$ and $u=1$ on~$F$.
It follows from the maximum principle that $0\le u\le1$.
Lemma~\ref{lem-extend} implies that
every quasiminimizing potential in $\Om$ 
is a quasisuperminimizer and thus lower semicontinuous.
It is also easily verified, and follows from Lemma~\ref{lem-extend},
that truncations $\min\{u,k\}$ of quasisuperminimizers 
are quasisuperminimizers with the same constant $Q$ for every $k\in\R$.

Another useful tool for studying and constructing quasiminimizers is
the upper obstacle problem as follows.
We will only use it with Sobolev obstacle and boundary values.
Given $f\in\Wp(\Om)$ and $\om\in\Wp\loc(\Om)$ let
\[
\K_{\om,f}(\Om) 
= \{ v\in\Wp(\Om): v-f\in\Wp_0(\Om) \text{ and } v\le \om \text{ in }\Om\}.
\]
If $\K_{\om,f}(\Om)\ne\emptyset$ then Theorem~3.12 in
Heinonen--Kilpel\"ainen--Martio~\cite{HeKiMa} implies that 
there exists a unique (in the Sobolev sense) 
$u\in\K_{\om,f}(\Om)$ such that
\[
\int_{\Om} |\grad u|^p \,dx \le \int_{\Om} |\grad v|^p \,dx
\quad \text{for all } v\in\K_{\om,f}(\Om).
\]
We call $u$ the solution of the 
$\K_{\om,f}(\Om)$\emph{-obstacle problem}
with the \emph{upper} obstacle $\om$ and the boundary values $f$.
We alert the reader that in the literature, such as~\cite{HeKiMa},  
one usually considers the obstacle problem with a lower obstacle $\psi\le u$,
or possibly a double obstacle problem.
For us, upper obstacles are more natural.

The following is essentially Lemma~6.3 in A.~Bj\"orn--Martio~\cite{BjMa-paste},
here formulated for the upper obstacle problem.
See also Lemma~6.1 in Martio~\cite{Martio} for a similar formulation.
Note that $\K_{\om,f}(\Om)\ne\emptyset$ if 
$(f-\om)_\limplus\in\Wp_0(\Om)$.

\begin{lem}   \label{lem-Olli-obst-pr}
Let $f\in\Wp(\Om)$ and let $\om$ be a
$Q$-quasi\-super\-mi\-ni\-m\-izer in $\Om$ such that 
$(f-\om)_\limplus\in\Np_0(\Om)$.
Then the solution of the obstacle problem with the upper obstacle $\om$
and the boundary data $f$ is a $Q$-quasiminimizer in $\Om$.
\end{lem}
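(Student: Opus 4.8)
The plan is to verify the defining inequality directly. Fix $\phi\in\Wp_0(\Om)$ and write $v=u+\phi$; the idea is to compare $u$ with the competitor $w:=\min\{v,\om\}=v-(v-\om)_\limplus$ in the obstacle problem. The first task is to check that $w\in\K_{\om,f}(\Om)$. Trivially $w\le\om$, and $w-f=(v-f)-(v-\om)_\limplus$, where $v-f=(u-f)+\phi\in\Wp_0(\Om)$ since $u$ solves the $\K_{\om,f}$-obstacle problem, while $(v-\om)_\limplus=(\phi+(u-\om))_\limplus\le\phi_\limplus$ because $u\le\om$. Granting for the moment that $(v-\om)_\limplus\in\Wp_0(\Om)$ (see below), this makes $w$ admissible, so by minimality of $u$,
\[
\int_\Om|\grad u|^p\,dx\le\int_\Om|\grad w|^p\,dx.
\]

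Next I would carry out the accounting. From $w=v-(v-\om)_\limplus$ one sees that $\grad w$ equals $\grad v$ a.e.\ on $\{v\le\om\}$ and $\grad\om$ a.e.\ on $\{v>\om\}$; also $\grad w=\grad u$ a.e.\ on $\{w=u\}$, and $\{w=u\}\cap\{\phi\ne0\}=\{\phi>0\}\cap\{u=\om\}$, a subset of $\{v>\om\}$ on which $\grad u=\grad\om$ a.e. Cancelling the common contribution of $\{w=u\}$ in the displayed inequality, splitting the rest according to $\{v\le\om\}$ and $\{v>\om\}$, and noting that the two resulting $\grad\om$-integrals add up to $\int_{\{v>\om\}}|\grad\om|^p\,dx$, one is led to
\[
\int_{\{\phi\ne0\}}|\grad u|^p\,dx\le\int_{\{\phi\ne0\}\cap\{v\le\om\}}|\grad v|^p\,dx+\int_{\{v>\om\}}|\grad\om|^p\,dx.
\]
To control the last term I would use that $\om$ is a $Q$-quasisuperminimizer, applied with the nonnegative test function $\psi:=(v-\om)_\limplus\in\Wp_0(\Om)$: since $\{\psi\ne0\}=\{v>\om\}$ and $\om+\psi=\max\{v,\om\}$ has gradient $\grad v$ a.e.\ on $\{v>\om\}$, this gives $\int_{\{v>\om\}}|\grad\om|^p\,dx\le Q\int_{\{v>\om\}}|\grad v|^p\,dx$. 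Since $\{v>\om\}\subseteq\{\phi\ne0\}$ and $Q\ge1$, inserting this above yields $\int_{\{\phi\ne0\}}|\grad u|^p\,dx\le Q\int_{\{\phi\ne0\}}|\grad v|^p\,dx$, i.e.\ $u$ is a $Q$-quasiminimizer.

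The one point requiring genuine care rather than bookkeeping is the claim that $(v-\om)_\limplus\in\Wp_0(\Om)$ --- equivalently, that $w$ really is an admissible competitor and $\psi$ a legitimate test function for $\om$. This rests on the standard fact, valid in the refined Sobolev framework used here, that a nonnegative function in $\Wp(\Om)$ majorized by a function in $\Wp_0(\Om)$ itself belongs to $\Wp_0(\Om)$; here the majorant is $\phi_\limplus$. Everything else --- the a.e.\ identification of $\grad\min\{v,\om\}$ and $\grad\max\{v,\om\}$ off the level set $\{v=\om\}$, and the set-theoretic bookkeeping that keeps the estimates localized to $\{\phi\ne0\}$ rather than all of $\Om$ --- is routine.
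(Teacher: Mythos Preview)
The paper does not give its own proof of this lemma; it simply records it as ``essentially Lemma~6.3 in A.~Bj\"orn--Martio~\cite{BjMa-paste}'' (see also Lemma~6.1 in Martio~\cite{Martio}). Your direct verification of the quasiminimizing inequality is correct and is the standard argument one finds in those references: compare $u$ with $w=\min\{v,\om\}$ in the obstacle problem, then feed $\psi=(v-\om)_\limplus$ into the quasisuperminimizer inequality for $\om$, and combine. The set-theoretic bookkeeping you outline (in particular the identity $\{w=u\}\cap\{\phi\ne0\}=\{u=\om\}\cap\{\phi>0\}\subset\{v>\om\}$, which makes the two $|\grad\om|^p$-pieces reassemble into $\int_{\{v>\om\}}|\grad\om|^p$) is accurate.

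One refinement concerning the point you flag as delicate. Your majorant argument for $(v-\om)_\limplus\in\Wp_0(\Om)$ needs $(v-\om)_\limplus\in\Wp(\Om)$ globally, but $\om$ is only assumed to lie in $\Wp\loc(\Om)$, so a priori you only have $(v-\om)_\limplus\in\Wp\loc(\Om)$ and the ``nonnegative and dominated by $\phi_\limplus$'' lemma does not apply as stated. The clean way around this is to recall (as the paper does, citing~\cite{ABremove}) that there are several equivalent definitions of quasiminimizer, and in particular it suffices to test with $\phi$ Lipschitz with compact support in $\Om$. For such $\phi$ the set $\{v>\om\}\subset\{\phi>0\}$ is contained in a compact subset of $\Om$, so $(v-\om)_\limplus$ has compact support and lies in $\Wp_0(\Om)$ automatically. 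With this adjustment your argument is complete.
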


We conclude this section by defining regular boundary points for quasiminimizers.

\begin{deff}
A point $x_0\in\bdry\Om$ is a \emph{regular boundary point} for 
$Q$-quasiminimizers
if~\eqref{eq-def-regular} holds for all $f\in C(\bdry\Om)\cap \Wp(\Om)$ and all 
$Q$-quasiminimizers $u$ with $u-f\in\Wp_0(\Om)$.
\end{deff}

\section{Wiener type estimates for quasiminimizers}
\label{sect-Wiener-est}

In this section we prove a preliminary version of Theorem~\ref{thm-regular-intro}.
It will be further improved in the next section.
The following definition from
Martio--Sbordone~\cite{MaSb} plays an important role for the estimates:
For $p,t>1$, let $p_1(p,t)$ be the unique solution in $(p,\infty)$ 
of the equation
\begin{equation}   \label{eq-def-p1}
t^p\frac{x-p}{x} \Bigl( \frac{x}{x-1} \Bigr)^p = 1.
\end{equation}
The unique solubility follows from the monotonicity of the left-hand side 
(which is easily proved by differentiation) and the fact that its
limits as $x\to p$ and $x\to\infty$ are $0$ and $t^p>1$.

The proof of Theorem~\ref{thm-regular-intro} is based on the 
following result from Martio~\cite[Corollary~5.3]{Martio}.
(Note that by the minimum principle, $\inf_{2B}u=\inf_{\bdry(2B)}u$.)

\begin{thm}   \label{thm-Olli}
Let $B=B(x_0,r)\subset\R^n$
and let  $u$ be a $Q$-quasiminimizing potential of a compact set 
$K\subset \itoverline{B}$ in $3B:=B(x_0,3r)$. 
Then 
\[
\inf_{2B}u \ge c \biggl(
    \frac{\capp(K,3B)}{r^{n-p}} \biggr)^{1/\de}
\]
where 
\[
\de = p-\frac{s}{s-1}>0, \quad
s\in\biggl( \frac{p}{p-1}, p_1\biggl(\frac{p}{p-1},Q^{1/p}\biggr)\biggr)
\] 
is arbitrary and $c>0$ depends only on $n$, $p$, $Q$ and $\de$.
\end{thm}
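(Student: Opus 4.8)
Since this is Corollary~5.3 of Martio~\cite{Martio}, in the paper I will simply quote it; but the plan for a self-contained argument --- and the reason the exponent $\de$ takes the stated form --- is as follows. First I would normalise by a translation and a dilation so that $x_0=0$ and $r=1$: this changes neither $\inf_{2B}u$ nor the scale-invariant quantity $\capp(K,3B)/r^{n-p}$, and it preserves the class of $Q$-quasiminimizers. By Lemma~\ref{lem-extend} the potential $u$ extends to a $Q$-quasisuperminimizer on all of $3B$ with $0\le u\le1$, $u=1$ on $K$ and $u\in\Wp_0(3B)$, and the minimum principle gives $m:=\inf_{2B}u=\inf_{\bdry(2B)}u$. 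If $m$ is bounded below by a constant depending only on $n$ and $p$ there is nothing to prove, since $\capp(K,3B)\le\capp(\itoverline B,3B)$ is then at most a dimensional constant and the inequality holds after shrinking $c$; so I may assume $m$ is small.

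The engine is a quantitative, capacitary form of De Giorgi's oscillation lemma, applied to $v:=1-u$, which is a $Q$-quasisubminimizer in $3B$ with $0\le v\le1$ and $v=0$ on $K$. Running the Caccioppoli inequality for $v$ against Sobolev's inequality over the levels of $v$, one shows that if the zero set of a quasisubminimizer is large \emph{in capacity} --- not merely in Lebesgue measure --- inside a ball, then its supremum on a smaller concentric ball drops by a definite amount, and that this drop is estimated by a \emph{power} of the capacity density. The admissible power is dictated by how far the auxiliary exponent $s$ can be pushed below the Martio--Sbordone threshold $p_1(p/(p-1),Q^{1/p})$ of~\eqref{eq-def-p1}, and what comes out is exactly $\de=p-s/(s-1)$. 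Inserting the data $v=0$ on $K$, with $\capp(K,3B)$ (equivalently $\capp(K,3B)/r^{n-p}$) playing the role of the capacity density, then gives $\sup_{2B}v\le1-c\,\capp(K,3B)^{1/\de}$, which with $r=1$ is the assertion.

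The hard part is exactly the calibration inside that oscillation lemma. Since quasiminimizers obey no differential equation and no comparison principle, the sharp \p-harmonic route --- through the Riesz measure of $u$ and a Wolff-potential lower bound --- is unavailable, and the capacity must be transported across the level sets of $v$ by purely variational, De Giorgi type estimates. In that iteration the Caccioppoli constant carries the factor $Q$ while Sobolev's inequality supplies the gain, and the two are compatible only while $s<p_1(p/(p-1),Q^{1/p})$; this both produces the exponent $1/\de$ and forces it to exceed the classical $1/(p-1)$, so the resulting bound is genuinely weaker, with equality recovered as $Q\to1$, where $p_1\to\infty$ and $\de\to p-1$. A more routine complication is that $u$ need not be flat away from $K$, so the cut-offs confining the estimates to $2B$ and $3B$ have to be chosen so as not to inject spurious energy; here Lemma~\ref{lem-extend} and the stability of quasisuperminimizers under truncation are what keep the bookkeeping under control.
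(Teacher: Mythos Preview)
Your proposal is correct and matches the paper exactly: the paper does not prove Theorem~\ref{thm-Olli} but simply cites it as Corollary~5.3 of Martio~\cite{Martio}, precisely as you state in your opening sentence. The self-contained sketch you add is extra commentary beyond what the paper provides; it is broadly reasonable, though in Martio's actual argument the exponent $\de$ arises more directly from the higher-integrability (reverse H\"older) exponent of $|\grad u|$, with the threshold $p_1$ coming from the one-dimensional theory of~\cite{MaSb}, rather than from a De~Giorgi oscillation iteration on $v=1-u$ as you frame it.
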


An iteration of Theorem~\ref{thm-Olli} now makes it possible to prove the
following estimate.

\begin{thm} \label{thm-iter-pot-est}
Let $x_0\in\R^n$, $r>0$ and $B_j=B(x_0,r_j)$, 
where $r_j=3^{-j}r$, $j=0,1,\ldots$.
Let $u$ be a $Q$-quasiminimizing
potential for a compact set $K\subset\itoverline{B}_1$ in $B_0$.
Then for all $k=0,1,\ldots,$
\begin{equation}   \label{eq-exp-est}
\inf_{B_{k+1}}u \ge 1 - \exp \biggl( -c \sum_{j=0}^{k} \biggl(
    \frac{\capp(K\cap \itoverline{B}_{j+1},B_{j})}
      {r_j^{n-p}}  
\biggr)^{1/\de} \biggr),
\end{equation}
where $\de$ and $c$ are as in Theorem~\ref{thm-Olli}.
\end{thm}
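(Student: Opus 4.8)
The natural approach is induction on $k$, iterating Theorem~\ref{thm-Olli} once per scale; the radii $r_j=3^{-j}r$ are chosen precisely so that $B_j=3B_{j+1}$, which is the configuration handled by that theorem. Write $t_j=\bigl(\capp(K\cap\itoverline B_{j+1},B_j)/r_j^{n-p}\bigr)^{1/\de}$, so that \eqref{eq-exp-est} reads $\inf_{B_{k+1}}u\ge 1-\exp(-c\sum_{j=0}^{k}t_j)$. The case $k=0$ needs no iteration: since $K\subset\itoverline B_1$ we have $K\cap\itoverline B_1=K$, and $u$ is itself a $Q$-quasiminimizing potential of $K$ in $B_0=3B_1$, so Theorem~\ref{thm-Olli} applied with the ball $B_1$ gives $\inf_{B_1}u\ge\inf_{2B_1}u\ge c\,t_0$ once the harmless factor $3^{n-p}$ is absorbed into $c$; since $1-e^{-s}\le s$ for $s\ge0$, this is \eqref{eq-exp-est} for $k=0$.

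For the inductive step, assume \eqref{eq-exp-est} with $k$ replaced by $k-1$, put $m=\inf_{B_k}u$ (so $0\le m\le1$ and, by the hypothesis, $1-m\le E_{k-1}:=\exp(-c\sum_{j=0}^{k-1}t_j)$), and assume $m<1$, as otherwise there is nothing to prove. The crux is to build, at scale $k$, a $Q$-quasiminimizing potential of $F:=K\cap\itoverline B_{k+1}$ in $B_k$ that lies below $u$; because quasiminimizers obey no comparison principle, we cannot simply compare $u$ with the a priori potential of $F$, and circumventing this is the main obstacle. I would instead use the obstacle problem. Since quasiminimizing potentials are quasisuperminimizers (Lemma~\ref{lem-extend}), $u$ is a $Q$-quasisuperminimizer in $B_k$; fix $\eta\in C^\infty_0(B_k)$ with $0\le\eta\le1$ and $\eta\equiv1$ on $\itoverline B_{k+1}$, and set $f=m+\eta(u-m)$. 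Then $f\in\Wp(B_k)$, $f-m=\eta(u-m)\in\Wp_0(B_k)$, $f\equiv1$ on $F$ (because $u\equiv1$ on $F$), and $f-u=(1-\eta)(m-u)\le0$ since $u\ge m$, so $(f-u)_\limplus\equiv0$. Hence Lemma~\ref{lem-Olli-obst-pr}, applied on $D:=B_k\setm F$ with upper obstacle $u$ and boundary data $f$, produces a $Q$-quasiminimizer $h_0$ in $D$ with $h_0\le u$ in $D$ and $h_0-f\in\Wp_0(D)$. Extend $h_0$ by $1$ on $F$ to a function $h$; as $\Wp_0(D)$-functions are $\Wp$-limits of test functions with support compact in $D$, hence away from both $F$ and $\bdry B_k$, the difference $h_0-f$ extends by zero to an element of $\Wp_0(B_k)$, so $h-m=(h_0-f)+\eta(u-m)\in\Wp_0(B_k)$. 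Since $h\le1$ in $D$ and $h\equiv1$ on $F$, the pasting Lemma~\ref{lem-extend} makes $h$ a $Q$-quasisuperminimizer in $B_k$, whence $h\ge m$ (test the quasisuperminimizing inequality against $(m-h)_\limplus\in\Wp_0(B_k)$, or note that $h_0$ has boundary data $f\ge m$ in $D$). Consequently $(h-m)/(1-m)\in\Wp_0(B_k)$ equals $1$ on $F$ and is a $Q$-quasiminimizer in $B_k\setm F$, i.e.\ a $Q$-quasiminimizing potential of $F$ in $B_k=3B_{k+1}$; moreover $h\le u$ in $B_k$, because $h_0\le u$ in $D$ and $h=1=u$ on $F$.

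Finally, Theorem~\ref{thm-Olli} applied to this potential with the ball $B_{k+1}$, using $B_{k+1}\subset2B_{k+1}$ and again absorbing $3^{n-p}$, gives
\[
\inf_{B_{k+1}}u\ \ge\ \inf_{B_{k+1}}h\ =\ m+(1-m)\inf_{B_{k+1}}\frac{h-m}{1-m}\ \ge\ m+(1-m)\,c\,t_k ,
\]
so that, by $1-s\le e^{-s}$ and the induction hypothesis $1-m\le E_{k-1}$,
\[
1-\inf_{B_{k+1}}u\ \le\ (1-m)(1-c\,t_k)\ \le\ E_{k-1}\,e^{-c\,t_k}\ =\ \exp\Bigl(-c\sum_{j=0}^{k}t_j\Bigr),
\]
which is exactly \eqref{eq-exp-est}; this closes the induction. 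The one genuinely delicate point is the middle paragraph --- above all the fact that the obstacle problem must be solved on $B_k\setm F$ (on $B_k$ the energy minimizer would collapse to the constant $m$, since $m\le u$ and $m-f\in\Wp_0(B_k)$), together with the $\Wp_0$-bookkeeping for the pasted function $h$; the constant tracking and the elementary inequalities $1-e^{-s}\le s$, $1-s\le e^{-s}$ are routine.
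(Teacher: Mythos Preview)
Your proof is correct and follows essentially the same route as the paper: at each scale you solve an upper obstacle problem in $B_k\setminus(K\cap\itoverline B_{k+1})$ to produce, below $u$, a $Q$-quasiminimizing potential of $K\cap\itoverline B_{k+1}$ in $B_k$, invoke Theorem~\ref{thm-Olli}, and multiply the resulting factors $(1-c\,t_j)\le e^{-c t_j}$. The only cosmetic differences are that the paper builds a descending sequence $u=u_0\ge u_1\ge\cdots$, taking $u_{j-1}$ (rather than $u$ itself) as the upper obstacle at step $j$, and uses a Lipschitz boundary datum equal to $1$ on all of $\itoverline B_{j+1}$ and $m_j$ on $\bdry B_j$; your choice $f=m+\eta(u-m)$ is slightly tidier because $(f-u)_\limplus\equiv0$ is immediate.
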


\begin{remark}
Since $e^{-t}$ is essentially equal to $1-t$ for small $t$, the above estimate 
can also be written as
\begin{equation*}  
\inf_{B_{k+1}}u \ge c' \sum_{j=0}^{k} \biggl(
    \frac{\capp(K\cap \itoverline{B}_{j+1},B_{j})}
      {r_j^{n-p}}  
\biggr)^{1/\de}.
\end{equation*}
An opposite estimate from above, with the (nonoptimal)
exponent $1/p$ instead of $1/\de$
was proved in Bj\"orn~\cite[Theorem~3.6]{JBCalcVar},
with $c'$ depending on $n$, $p$ and $Q$.
\end{remark}

\begin{proof}[Proof of Theorem~\ref{thm-iter-pot-est}.]
Let $m_1 = \inf_{B_1} u$.
By Theorem~\ref{thm-Olli} we have
\[
m_1 \ge \inf_{2B_1} u \ge
c \biggl( \frac{\capp(K,B_{0})}{r_0^{n-p}}   
\biggr)^{1/\de}.
\]
As $1+t\le e^t$ for $t\in\R$, this implies 
\begin{equation}
1-m_1 \le 1-c \biggl( \frac{\capp(K,B_{0})}{r_0^{n-p}} \biggr)^{1/\de}
     \le \exp \biggl( -c \biggl( \frac{\capp(K,B_{0})}
             {r_0^{n-p}}  
\biggr)^{1/\de} \biggr).
\label{eq-est-m1-with-exp}
\end{equation}
Next, let $D_1=B_1\setm(K\cap\itoverline{B}_2)$ and 
let $f_1$ be a Lipschitz function
in $B_0$ such that
\[
f_1 = \left\{ \begin{array}{ll}
              m_1 & \text{on } \bdry B_1,\\
              1   & \text{on } \itoverline{B}_2. 
              \end{array} \right.
\]
Let $u_1$ be the solution of the obstacle problem in $D_1$ 
with the upper obstacle $u$ and the boundary values $f_1$.
By Lemma~\ref{lem-Olli-obst-pr}, $u_1$ is a $Q$-quasiminimizer 
in $D_1$ and Lemma~\ref{lem-extend} shows that $u_1$, 
when extended by 1 in $K\cap\itoverline{B}_2$, 
is a $Q$-quasisuperminimizer in $B_1$.

Theorem~\ref{thm-Olli} with $K$, $B(x_0,3R)$ and $u$
replaced by $K\cap\itoverline{B}_2$, $B_1$ and $(u_1-m_1)/(1-m_1)$,
respectively, shows that
\[
m_2:= \inf_{B_2} u_1 \ge
  m_1 + c(1-m_1) \biggl( \frac{\capp(K\cap\itoverline{B}_2,B_1)}
         {r_1^{n-p}}  
\biggr)^{1/\de},
\]
and consequently,
\[
1-m_2 \le (1-m_1)  \exp 
\biggl( -c \biggl( \frac{\capp(K\cap\itoverline{B}_2,B_1)}
             {r_1^{n-p}}  
\biggr)^{1/\de} \biggr).
\]
We continue in this way, putting
$D_j=B_j\setm(K\cap\itoverline{B}_{j+1})$,
\[
f_j = \left\{ \begin{array}{ll}
              m_j & \text{on } \bdry B_j,\\
              1   & \text{on } \itoverline{B}_{j+1},
              \end{array} \right.
\]
and letting $u_j$ be the solution of the obstacle problem in $D_j$ 
with the upper obstacle $u_{j-1}$ 
(which is a $Q$-quasisuperminimizer in $D_j$ by Lemma~\ref{lem-extend}) 
and the boundary values $f_j$, $j=2,3,\ldots.$
Another application of Theorem~\ref{thm-Olli} implies that
\[
m_{j+1}:= \inf_{B_{j+1}} u_j \ge m_j + c(1-m_j)
         \biggl( \frac{\capp(K\cap\itoverline{B}_{j+1},B_j)}
         {r_j^{n-p}}  
\biggr)^{1/\de},
\quad j=2,3,\ldots,
\]
and consequently,
\[
1-m_{j+1} \le (1-m_j)  \exp \biggl( -c \biggl( 
    \frac{\capp(K\cap\itoverline{B}_{j+1},B_j)}
             {r_j^{n-p}}  
\biggr)^{1/\de} \biggr).
\]
Iterating this inequality and using \eqref{eq-est-m1-with-exp} we obtain 
for $k=1,2,\ldots,$
\[
1-m_{k+1} \le \exp \biggl( -c \sum_{j=0}^{k} \biggl(
    \frac{\capp(K\cap \itoverline{B}_{j+1},B_{j})}
      {r_j^{n-p}}   
\biggr)^{1/\de} \biggr).
\]
As $u\ge u_1$ in $B_1$ and $u_j \ge u_{j+1}$ in $B_{j+1}$, $j=1,2,\ldots$,
this finishes the proof.
\end{proof}

\begin{thm}   \label{thm-regular}
Let $x_0\in\bdry \Om$, $r>0$ and $B_j=B(x_0,r_j)$, where $r_j=3^{-j}r$, 
$j=0,1,\ldots.$
Assume that for some (or equivalently for all) $r>0$,
\begin{equation}
\sum_{j=0}^{\infty} \biggl(
    \frac{\capp(B_{j+1}\setm\Om,B_{j})}
      {r_j^{n-p}} \biggr)^{1/\de} =\infty,
\label{eq-div-wiener-sum}
\end{equation}
where $\de$ is as in Theorem~\ref{thm-Olli}.
Then $x_0$ is a regular boundary point for $Q$-quasiminimizers.
\end{thm}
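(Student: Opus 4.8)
The plan is to derive the regularity of $x_0$ from the potential estimate in Theorem~\ref{thm-iter-pot-est} by a standard barrier-type argument, adapted to the quasiminimizer setting via the obstacle problem and the pasting lemma. First I would reduce, by subtracting a constant and scaling, to proving the following: if $u$ is a $Q$-quasiminimizer in $\Om$ with $u - f \in \Wp_0(\Om)$, $f \in C(\bdry\Om)\cap\Wp(\Om)$ and $f(x_0)=0$, then $\limsup_{\Om\ni x\to x_0} u(x) \le 0$ (and symmetrically $\liminf \ge 0$, which follows by applying the same argument to $-u$, noting that $-u$ is a $Q$-quasiminimizer with boundary data $-f$). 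Fix a small $\eta>0$; by continuity of $f$ there is $r>0$ so that $f \le \eta$ on $\bdry\Om\cap B_0 = \bdry\Om\cap B(x_0,r)$. The goal is then to show $u \le \eta + o(1)$ near $x_0$, i.e.\ to build an upper barrier out of a quasiminimizing potential of the complement $\R^n\setm\Om$.

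The key construction is as follows. Let $F_j = \itoverline{B}_{j+1}\setm\Om$ (a compact subset of $\itoverline{B}_1$ when $j\ge 1$, after shrinking; for the leading term use $F = \itoverline{B}_1\setm\Om$), and let $w$ be the $Q$-quasiminimizing potential of $\itoverline{B}_1\setm\Om$ in $B_0$, extended by $0$ outside $B_0$; then $0\le w\le 1$, $w=1$ on $(\itoverline{B}_1\setm\Om)$, and by Theorem~\ref{thm-iter-pot-est},
\[
\inf_{B_{k+1}} w \ge 1 - \exp\biggl( -c \sum_{j=0}^{k} \biggl( \frac{\capp((\itoverline{B}_1\setm\Om)\cap\itoverline{B}_{j+1},B_j)}{r_j^{n-p}} \biggr)^{1/\de} \biggr).
\]
Since $(\itoverline{B}_1\setm\Om)\cap\itoverline{B}_{j+1} = \itoverline{B}_{j+1}\setm\Om$ and $\capp(\itoverline{B}_{j+1}\setm\Om,B_j)\ge\capp(B_{j+1}\setm\Om,B_j)$ up to a capacity-zero discrepancy on the sphere, the divergence hypothesis~\eqref{eq-div-wiener-sum} forces $\inf_{B_{k+1}} w \to 1$ as $k\to\infty$. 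Now set, on $\Om\cap B_0$,
\[
v = \eta + (M+\|f\|_\infty) w, \qquad M = \sup_{\Om} |u| < \infty,
\]
and observe that $v$ is a $Q$-quasisuperminimizer in $\Om\cap B_0$ (it is a positive affine function of a potential, hence of a quasisuperminimizer — here I would invoke that constants and positive scalar multiples of quasisuperminimizers are quasisuperminimizers, and Lemma~\ref{lem-extend}/the remarks on truncations in Section~\ref{sect-prelim}). The comparison I need is $u \le v$ on $\bdry(\Om\cap B_0)$: on $\bdry\Om\cap B_0$ we have $u = f \le \eta \le v$, and on $\Om\cap\bdry B_0$ we have $w = 1$ (as $\bdry B_0\subset\itoverline{B}_1$ is false — rather I would instead take $w$ to be the potential of $\itoverline{B}_1\setm\Om$ but restrict the comparison domain to $\Om\cap B_1$, where on $\Om\cap\bdry B_1 = \Om\cap\bdry B_1$ we have... ) — more cleanly: work in $D = \Om\cap B_1$, let $w$ be the potential of $\itoverline{B}_2\setm\Om$ in $B_1$ so that $w\ge\inf_{B_{k+1}}w\to 1$, and on $\Om\cap\bdry B_1$ one uses that $u\le M$ while $v\ge \eta + (M+\|f\|_\infty)\cdot 0$ is not yet $\ge M$; so instead choose the coefficient to dominate on $\bdry B_1$ directly by taking $v = \eta + (M + \|f\|_\infty)\,w_1$ where $w_1$ is the potential of the larger set $(\R^n\setm\Om)\cap \itoverline{B}_1 \,\cup\,\bdry B_1$, i.e.\ force $w_1 = 1$ on $\bdry B_1$ as well.

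Granting the boundary comparison $u\le v$ on $\bdry D$, the missing ingredient is a comparison principle — which, as the preliminaries stress, fails for quasiminimizers. Here is where I expect the real obstacle, and the way around it is to use the obstacle problem exactly as in the proof of Theorem~\ref{thm-iter-pot-est}: instead of comparing $u$ with $v$ directly, solve the $\K_{v,g}(D)$-obstacle problem with upper obstacle $v$ and boundary data $g$ chosen with $u - g\in\Wp_0(D)$ and $g\le v$ on $\bdry D$; the solution $\tilde u$ is a $Q$-quasiminimizer by Lemma~\ref{lem-Olli-obst-pr}, satisfies $\tilde u\le v$, and by uniqueness of the obstacle problem together with the fact that $u$ itself is a competitor when $u\le v$... — but $u\le v$ is what we want to prove. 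The correct fix (this is the standard trick, e.g.\ in \cite{Bj02}) is: $u$ is a $Q$-quasiminimizer in $\Om$, hence in $D$; $v$ is a $Q$-quasisuperminimizer in $D$; then $\min\{u,v\}$ is a competitor for the obstacle problem solved by $u$ in the sense that one shows $\int_{\{v<u\}}|\grad u|^p \le Q\int_{\{v<u\}}|\grad v|^p$ using that $(u-v)_\limplus\in\Wp_0(D)$ (which holds because on $\bdry D$, $u\le v$), and a dual inequality from $v$ being a quasisuperminimizer tested against $-(u-v)_\limplus\le 0$; combining these on the set $\{v<u\}$ and using strict convexity of $t\mapsto t^p$ forces $|\{v<u\}|=0$, i.e.\ $u\le v$ a.e., hence everywhere by continuity. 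Finally, letting $k\to\infty$ gives $\limsup_{\Om\ni x\to x_0} u \le \eta + (M+\|f\|_\infty)\cdot(1 - 1) = \eta$; since $\eta>0$ was arbitrary, $\limsup u\le 0 = f(x_0)$, and the symmetric argument for $-u$ completes the proof. The main obstacle, as indicated, is replacing the absent comparison principle by this obstacle-problem/convexity argument and verifying the requisite $\Wp_0$ membership and the two variational inequalities on $\{v<u\}$; everything else (scaling invariance of the hypothesis, boundedness of $u$, that affine functions of potentials are quasisuperminimizers) is routine.
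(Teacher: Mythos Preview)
Your comparison argument at the end is the real gap, and it does not go through when $Q>1$. Testing the quasiminimizer inequality for $u$ with $\phi=-(u-v)_\limplus$ gives $\int_{\{u>v\}}|\grad u|^p\le Q\int_{\{u>v\}}|\grad v|^p$, and testing the quasisuperminimizer inequality for $v$ with $\phi=(u-v)_\limplus$ gives $\int_{\{u>v\}}|\grad v|^p\le Q\int_{\{u>v\}}|\grad u|^p$. Chaining these yields only $\int_{\{u>v\}}|\grad u|^p\le Q^2\int_{\{u>v\}}|\grad u|^p$, which is vacuous for $Q>1$; strict convexity of $t\mapsto t^p$ gives nothing here because you never obtain an equality. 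This is precisely the failure of the comparison principle that the preliminaries warn about, and your argument does not circumvent it. (The earlier difficulties with arranging $u\le v$ on $\bdry D$ are symptomatic of the same issue but secondary.)

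The paper's proof avoids comparison entirely by reversing the roles: rather than building a barrier \emph{above} $u$ and trying to compare, it builds a quasiminimizing potential \emph{below} $u$ by using $u$ itself as the obstacle. After normalizing so that $f\ge0$ and $f(x_0)=1$, one chooses $r$ with $f\ge1-\eps$ on $B_0$, sets $\bar u=\min\{u,1-\eps\}$ (a $Q$-quasisuperminimizer in $B_0$ by truncation and Lemma~\ref{lem-extend}), and lets $u_0$ solve the \emph{upper} obstacle problem in $B_0\setm(\itoverline{B}_1\setm\Om)$ with obstacle $\bar u$ and boundary values $0$ on $\bdry B_0$, $1-\eps$ on $\itoverline{B}_1\setm\Om$. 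By Lemma~\ref{lem-Olli-obst-pr}, $u_0/(1-\eps)$ is a $Q$-quasiminimizing potential, so Theorem~\ref{thm-iter-pot-est} applies directly. The inequality $u\ge\bar u\ge u_0$ is \emph{built in} by the obstacle constraint --- no comparison principle is invoked --- and the divergence of the sum forces $\liminf_{y\to x_0}u_0(y)\ge1-\eps$, hence $\liminf u\ge1$. This use of the given quasiminimizer as the obstacle is the idea your proposal is missing.
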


\begin{proof}  
Let $f\in C(\bdry\Om)\cap\Wp(\Om)$ and $u$ be a $Q$-quasiminimimizer in $\Om$ 
such that $u-f\in\Np_0(\Om)$.
We can without loss of generality assume that $f\ge0$  
and $f(x_0)=1$.
Let $\eps>0$ and find $r>0$ such that $f\ge1-\eps$ in $B_0=B(x_0,r)$.

Let $\bar{u}=\min\{u,1-\eps\}$.
This is a $Q$-quasisuperminimizer in $B_0\cap\Om$. 
Lemma~\ref{lem-extend} 
implies that $\bar{u}$ is a $Q$-quasisuperminimizer in $B_0$ as well.
Next let $K=\itoverline{B}_1\setm\Om$ and let
$u_0\in\Np_0(B_0)$ be the solution of the obstacle problem in 
$B_0\setm K$ with the upper obstacle $\bar{u}$ and
the boundary values $0$ on $\bdry B_0$ and $1-\eps$ on $K$.
By Lemma~\ref{lem-Olli-obst-pr}, $u_0$ is a $Q$-quasiminimizer in 
$B_0\setm K$.
Theorem~\ref{thm-iter-pot-est} implies that for all $k=1,2,\ldots,$
\[
\inf_{B_{k+1}}u_0 \ge (1-\eps) \biggl(1 - \exp \biggl( -c \sum_{j=0}^{k} \biggl(
    \frac{\capp(\itoverline{B}_{j+1}\setm\Om,B_{j})}
      {r_j^{n-p}}  
\biggr)^{1/\de} \biggr) \biggr).
\]
The assumption~\eqref{eq-div-wiener-sum} then yields
\[
\liminf_{y\to x_0} u_0(y) \ge 1-\eps.
\]
As $\eps>0$ was arbitrary and $u\ge\bar{u}\ge u_0$, we obtain
\[
\liminf_{y\to x_0} u(y) \ge 1 = f(x_0).
\]
Applying the same argument to $-f$ finishes the proof.
\end{proof}

\begin{remark}
It is clear that the radii $3^{-j}r$ in Theorems~\ref{thm-iter-pot-est}
and~\ref{thm-regular}  can equivalently
be replaced by any other geometric sequence,
such as $2^{-j}$ in~\eqref{eq-div-wiener-sum-intro}, 
and that the sum can equivalently be replaced by an integral. 
\end{remark}

\section{Simplifying the exponent in Theorem~\ref{thm-regular}}
\label{sect-simplify}

In this section we shall investigate how the exponent $1/\de$ in 
Theorems~\ref{thm-Olli}, \ref{thm-iter-pot-est}
and~\ref{thm-regular} depends on $Q$ and $p$. 
We will provide a rather explicit form for it.
Recall that 
\[
\de=p-\frac{s}{s-1}, 
\quad \text{where }
s\in \biggl(\frac{p}{p-1},p_1\biggr) \quad \text{is arbitrary and}
\] 
\[
p_1=p_1\biggl(\frac{p}{p-1},Q^{1/p} \biggr)>\frac{p}{p-1}
\]
is the unique solution of the equation
\[
(Q^{1/p})^{p/(p-1)} \frac{x-\frac{p}{p-1}}{x} 
\Bigl( \frac{x}{x-1} \Bigr)^{p/(p-1)} = 1,
\]
see~\eqref{eq-def-p1}.
Our aim is to express $p_1$ more explicitly in terms of $Q$ and $p$.
This will be done in several steps. 
For this, the following identity will be crucial. 

For $p>1$ and $\al>1-1/p$, let 
\begin{equation}   \label{eq-def-Qalp}
Q(\al,p) = \frac{\al^p}{1+p(\al-1)}.
\end{equation}
The significance of~\eqref{eq-def-Qalp} is given by the following theorem,
which for $n=1$ and $p=2$ was obtained by 
Judin~\cite[Example~4.0.26 and Remark~4.0.28]{judin}
and Martio~\cite{martioReflect}, Section~5.
For general $p>n$ it is Theorem~6.1 in Bj\"orn--Bj\"orn~\cite{BB-power}.
Here we use it with $n=1$. 

\begin{thm} \label{thm-qmin-power-p>n}
Let $p >n$. 
Then $|x|^\al$ is a quasiminimizer 
in\/ $B(0,1) \setm \{0\}\subset\R^n$
if and only if $\alp > 1-n/p$ or $\alp=0$.
Moreover, if $\alp > 1-n/p$, then 
\begin{equation*} 
\biggl(\frac{p-1}{p-n}\biggr)^{p-1} \frac{\alp^p}{n+p(\alp-1)}
\end{equation*}
is the best quasiminimizer constant for $|x|^\al$.

In particular, if $\al>1-1/p$ then $x^\al$ is a quasiminimizer 
in $(0,1)$ with the best quasiminimizer constant equal to $Q(\al,p)$.
\end{thm}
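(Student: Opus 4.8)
The plan is to dispose of the constant case and then reduce the radial power $|x|^\al$ to a one-dimensional weighted extremal problem whose solution is the radial \p-harmonic function. If $\al=0$ then $|x|^\al\equiv1$ has $\grad u\equiv0$ and is trivially a $1$-quasiminimizer, so assume $\al\ne0$. Since $p>n$, the origin has positive \p-capacity, so the admissible perturbations of $u$ in $B(0,1)\setm\{0\}$ are exactly the functions $\phi\in\Wp_0(B(0,1)\setm\{0\})$, that is (up to $\Wp$-limits) smooth functions vanishing near $0$ and near $\bdry B(0,1)$. Throughout, the natural competitor for $u=|x|^\al$ on an annulus $A_{a,b}:=\{a<|x|<b\}$, $0<a<b<1$, is the radial \p-harmonic function $v_{a,b}(x)=A|x|^\be+B$, with $\be:=(p-n)/(p-1)>0$ and $A,B$ chosen so that $v_{a,b}=u$ on $\bdry A_{a,b}$; as the boundary data are rotationally symmetric, $v_{a,b}$ minimizes $\int_{A_{a,b}}|\grad w|^p\,dx$ among all $w$ agreeing with $u$ on $\bdry A_{a,b}$.

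Both the necessity of $\al>1-n/p$ and the sharpness of the constant come from a single explicit computation. Writing $\ga:=n+p(\al-1)$ and integrating in polar coordinates,
\[
\frac{\int_{A_{a,b}}|\grad u|^p\,dx}{\int_{A_{a,b}}|\grad v_{a,b}|^p\,dx}
=\frac{\al^p}{\ga\,\be^{p-1}}\cdot\frac{(b^\ga-a^\ga)(b^\be-a^\be)^{p-1}}{(b^\al-a^\al)^p}
\]
(with $\ga=0$, resp.\ $\al<0$, requiring the obvious modifications). Extending $v_{a,b}$ by $u$ outside $A_{a,b}$ yields an admissible $\phi=v_{a,b}-u$, and comparing energies on $\{\phi\ne0\}$ shows that every quasiminimizing constant of $u$ must dominate this quotient for all $0<a<b<1$. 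Since $\ga+(p-1)\be=p\al$, letting $a\to0$ with $b$ fixed gives: if $\al>1-n/p$ (equivalently $\ga>0$, which already forces $\al>0$) the quotient tends to $\al^p/(\ga\be^{p-1})=\bigl(\tfrac{p-1}{p-n}\bigr)^{p-1}\tfrac{\al^p}{n+p(\al-1)}$; while if $\al\ne0$ and $\ga\le0$ (covering $\al<0$ as well as $0<\al\le1-n/p$) the quotient is unbounded. Hence $|x|^\al$ is not a quasiminimizer in those ranges, and for $\al>1-n/p$ its best quasiminimizing constant is at least the asserted one.

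For the converse, assume $\al>1-n/p$ and put $Q_0:=\bigl(\tfrac{p-1}{p-n}\bigr)^{p-1}\tfrac{\al^p}{n+p(\al-1)}$. One first reduces to radial competitors: replacing a competitor $w$ by its average $\widetilde w$ over the spheres $\bdry B(0,\rho)$ does not increase $\int|\grad w|^p\,dx$ (Jensen's inequality together with the radial symmetry of $u$) and keeps $\widetilde w-u$ admissible; a decomposition into spherical shells then reduces everything to showing, for all $0<a<b<1$ and all $w$ with $w=\rho^\al$ at $\rho=a,b$,
\[
\int_a^b|(\rho^\al)'|^p\rho^{n-1}\,d\rho\le Q_0\int_a^b|w'|^p\rho^{n-1}\,d\rho .
\]
Bounding the right-hand side below by the weighted energy of $v_{a,b}$ and using $Q_0\be^{p-1}=\al^p/\ga$, this collapses to the elementary inequality
\[
(b^\ga-a^\ga)(b^\be-a^\be)^{p-1}\le(b^\al-a^\al)^p,\qquad 0<a<b,
\]
in which $\tfrac1p\ga+\tfrac{p-1}{p}\be=\al$. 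This is Jensen's inequality for $t\mapsto\log(b^t-a^t)$ at the points $\ga,\be\in(0,\infty)$, so it suffices that $\psi(t):=b^t-a^t$ be log-concave on $(0,\infty)$; and indeed $\psi''\psi-(\psi')^2=-a^tb^t(\log(b/a))^2\le0$. Combined with the previous paragraph, this gives best constant equal to $Q_0$.

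The step I expect to be the real obstacle is this passage from arbitrary competitors in $\R^n$ to radial ones: spherical averaging lowers the \p-energy but at the same time shrinks the set on which the competitor differs from $u$, so checking that the quasiminimizing constant is not spoiled requires the care taken in Bj\"orn--Bj\"orn~\cite{BB-power}; the two annulus energies, the limit $a\to0$, and the log-concavity estimate are all routine. Specializing to $n=1$ makes the reduction transparent: then $\be=1$, $v_{a,b}$ is affine, $\{\phi\ne0\}$ (which is open since $p>1$) splits into maximal intervals on each of which $\phi$ vanishes at the endpoints, and $Q_0=\al^p/(1+p(\al-1))=Q(\al,p)$; together with the computations of Judin~\cite{judin} and Martio~\cite{martioReflect} for $p=2$, this yields the ``in particular'' statement.
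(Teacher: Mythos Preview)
The paper does not prove this theorem; it is quoted verbatim from Bj\"orn--Bj\"orn~\cite{BB-power} (Theorem~6.1 there), with the one-dimensional $p=2$ case attributed to Judin~\cite{judin} and Martio~\cite{martioReflect}. There is therefore no in-paper argument to compare your proposal against, and in the body of the paper only the final ``in particular'' clause (the case $n=1$) is ever used.

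As a self-contained sketch your computations are sound: the energy ratio on annuli, the identity $\tfrac1p\ga+\tfrac{p-1}p\be=\al$, the limit $a\to0$ giving both the necessity of $\al>1-n/p$ and the lower bound $Q_0$, and the log-concavity of $t\mapsto b^t-a^t$ (your formula $\psi''\psi-(\psi')^2=-a^tb^t(\log(b/a))^2$ is correct) yielding the upper bound for \emph{radial} competitors --- all of this is fine. You also correctly identify the one genuine gap: spherical averaging of $u+\phi$ decreases the \p-energy over the radial hull $\widetilde E\supset\{\phi\ne0\}$, but since $u+\phi=u$ on $\widetilde E\setm\{\phi\ne0\}$, comparing on $\widetilde E$ and then restricting back to $\{\phi\ne0\}$ leaves an extra term $(Q_0-1)\int_{\widetilde E\setm\{\phi\ne0\}}|\grad u|^p$ on the wrong side. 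Closing this is exactly the work done in~\cite{BB-power}, to which you defer. For $n=1$ no such reduction is needed and your decomposition of $\{\phi\ne0\}$ into maximal subintervals, together with the log-concavity inequality on each, gives a complete proof of the ``in particular'' statement --- which is all the paper requires.
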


Note that, given $Q>1$, there are exactly two exponents $1-1/p<\al<1<\albar$
such that $Q=Q(\al,p)=Q(\albar,p)$.
This is easily shown by differentiating~\eqref{eq-def-Qalp} and noting
that the derivative is negative for $\al<1$ and positive for $\al>1$,
and that $Q(\al,p)\to\infty$ as $\al\to 1-1/p$ and as $\al\to\infty$.
For $Q=1$ we have $\al=\albar=1$.
The following lemma is easily proved by direct calculation.

\begin{lem}  \label{lem-pQ}
Let $\al\in(1-1/p,1]$ and $Q=Q(\al,p)$.
Then $p_1(p,Q^{1/p}) = 1/(1-\al)$.
\end{lem}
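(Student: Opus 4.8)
The plan is to prove Lemma~\ref{lem-pQ} by a direct substitution: I would check that $x_0:=1/(1-\al)$ solves the defining equation~\eqref{eq-def-p1} for $p_1(p,t)$ with $t=Q^{1/p}$ (so that $t^p=Q=\al^p/(1+p(\al-1))$), and then invoke uniqueness. The first step is to confirm that $x_0$ lies in the right range. Since $\al\in(1-1/p,1)$ we have $1-\al\in(0,1/p)$, hence $p(1-\al)<1$, which gives exactly $x_0=1/(1-\al)\in(p,\infty)$; this is the interval in which $p_1$ is sought. (For the boundary value $\al=1$ one has $Q=1$, and the asserted identity is to be read in the limiting sense $p_1(p,1)=\infty=1/(1-\al)$, consistent with the fact that the left-hand side of~\eqref{eq-def-p1} then increases to $1$ only as $x\to\infty$.)

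Next I would evaluate the three factors on the left-hand side of~\eqref{eq-def-p1} at $x=x_0$. A one-line computation gives
\[
\frac{x_0-p}{x_0}=(1-\al)\Bigl(\frac{1}{1-\al}-p\Bigr)=1-p(1-\al)=1+p(\al-1),
\]
and, since $x_0-1=\al/(1-\al)$,
\[
\Bigl(\frac{x_0}{x_0-1}\Bigr)^p=\Bigl(\frac{1/(1-\al)}{\al/(1-\al)}\Bigr)^p=\al^{-p}.
\]
Substituting these together with $t^p=Q$ into the left-hand side of~\eqref{eq-def-p1} yields
\[
Q\cdot\bigl(1+p(\al-1)\bigr)\cdot\al^{-p}
=\frac{\al^p}{1+p(\al-1)}\cdot\frac{1+p(\al-1)}{\al^p}=1,
\]
so $x_0$ indeed satisfies~\eqref{eq-def-p1} with $t=Q^{1/p}$.

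Finally, since (as noted after~\eqref{eq-def-p1}) that equation has a \emph{unique} solution in $(p,\infty)$ and we have exhibited $x_0=1/(1-\al)$ as one such solution, it must be the solution, i.e.\ $p_1(p,Q^{1/p})=1/(1-\al)$. I do not expect any real obstacle here: the argument is entirely computational, and the only mildly delicate point is the range check $x_0\in(p,\infty)$, which is immediate from the hypothesis $\al>1-1/p$ (and $\al<1$).
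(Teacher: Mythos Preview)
Your proof is correct and is precisely the direct calculation the paper has in mind (the paper does not spell out the proof, merely noting that the lemma ``is easily proved by direct calculation''). Your handling of the range check $x_0\in(p,\infty)$ and of the boundary case $\al=1$ is appropriate.
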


Replacing $p$ and $Q$ in Lemma~\ref{lem-pQ} by $p'=p/(p-1)$ 
and $Q'=Q^{1/(p-1)}$, respectively,
we immediately obtain the following result, which will be useful when 
simplifying the exponent $1/\de$ in the Wiener type condition.
Note that $Q^{1/p}=(Q')^{1/p'}$ and that $x^\be$ is a quasiminimizer 
of the $p'$-energy in $(0,1)$ if and only if $\be=0$ or $\be>1/p$, 
by Theorem~\ref{thm-qmin-power-p>n}.

\begin{cor}  \label{cor-beta-p1}
Let $\be\in(1/p,1]$ be such that  
\[
Q^{1/(p-1)}=\frac{\be^{\frac{p}{p-1}}}{1+\frac{p}{p-1}(\be-1)}.
\]
Then 
\[
p_1\biggl(\frac{p}{p-1},Q^{1/p}\biggr) = \frac{1}{1-\be}.
\]
\end{cor}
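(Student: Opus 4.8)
The plan is to apply Lemma~\ref{lem-pQ} with $p$ and $Q$ replaced by the ``dual'' data $p' = p/(p-1)$ and $Q' = Q^{1/(p-1)}$, and then translate the conclusion back. First I would check that this substitution is legitimate: since $1 < p \le n$ we have $p' = p/(p-1) > 1$, so Lemma~\ref{lem-pQ} applies to $p'$; and since $Q > 1$ we have $Q' = Q^{1/(p-1)} > 1$, so there is indeed a unique $\be \in (1-1/p', 1]$ with $Q' = Q(\be, p')$, where $Q(\cdot,\cdot)$ is as in~\eqref{eq-def-Qalp}. Note $1 - 1/p' = 1 - (p-1)/p = 1/p$, so the hypothesis $\be \in (1/p, 1]$ in the corollary is exactly the condition $\be \in (1 - 1/p', 1]$ from the lemma, and
\[
Q(\be, p') = \frac{\be^{p'}}{1 + p'(\be - 1)} = \frac{\be^{p/(p-1)}}{1 + \frac{p}{p-1}(\be - 1)},
\]
which is precisely the expression appearing in the statement of Corollary~\ref{cor-beta-p1}.

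Next I would simply invoke Lemma~\ref{lem-pQ} in its dual form: it gives $p_1(p', (Q')^{1/p'}) = 1/(1 - \be)$. The only remaining point is the identification of the first two arguments of $p_1$ with those in the corollary. The second argument is $(Q')^{1/p'} = (Q^{1/(p-1)})^{(p-1)/p} = Q^{1/p}$, so it matches. The first argument is $p' = p/(p-1)$, which matches. Hence
\[
p_1\biggl(\frac{p}{p-1}, Q^{1/p}\biggr) = p_1(p', (Q')^{1/p'}) = \frac{1}{1 - \be},
\]
which is the assertion of the corollary.

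I do not expect any genuine obstacle here, since the corollary is a direct specialization of Lemma~\ref{lem-pQ} dressed up with the bookkeeping identities $p' = p/(p-1)$, $(Q')^{1/p'} = Q^{1/p}$, and $1 - 1/p' = 1/p$. If one wanted to be fully self-contained rather than quoting Lemma~\ref{lem-pQ}, the substance would be the direct calculation behind that lemma: given $\be \in (1/p, 1]$ with $Q' = Q(\be, p')$, one substitutes $x = 1/(1-\be)$ into the defining equation~\eqref{eq-def-p1} for $p_1$ with parameters $(p', Q^{1/p})$, namely
\[
(Q^{1/p})^{p'} \cdot \frac{x - p'}{x} \cdot \Bigl(\frac{x}{x-1}\Bigr)^{p'} = 1,
\]
and verifies the identity using $Q^{1/p} \cdot p' \cdot$ (powers of $\be$) $= Q(\be,p')$; one then notes $x = 1/(1-\be) > p' = p/(p-1)$ precisely because $\be > 1/p$, so this $x$ is the root in the correct range $(p', \infty)$, and uniqueness (from the monotonicity noted after~\eqref{eq-def-p1}) finishes it. But quoting Lemma~\ref{lem-pQ} as the excerpt suggests (``Replacing $p$ and $Q$ \ldots'') is the cleaner route, and the whole proof is then a two-line verification of the dual identities above.
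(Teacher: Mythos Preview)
Your proof is correct and follows exactly the paper's approach: the corollary is obtained from Lemma~\ref{lem-pQ} by the substitution $p\mapsto p'=p/(p-1)$, $Q\mapsto Q'=Q^{1/(p-1)}$, together with the identities $(Q')^{1/p'}=Q^{1/p}$ and $1-1/p'=1/p$. Your additional verification that $\be\in(1/p,1]$ matches the hypothesis $\be\in(1-1/p',1]$ of the lemma is the same bookkeeping the paper alludes to in the sentence preceding the corollary.
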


Thus, Theorem~\ref{thm-regular} can be reformulated in terms of the 
exponent $\be$ and this will be used to prove Theorem~\ref{thm-regular-intro}.
To replace $\be$ by an exponent associated with $Q$ and $p$,
rather than $Q^{1/(p-1)}$ and $p'$, we use the following result,
which is also easily proved by direct calculation.

\begin{lem}   \label{lem-al-be}
For $p>1$ and $\al>1-1/p$, let $Q(\al,p)$ be as in~\eqref{eq-def-Qalp}
and
\[
\beta(\al) = \frac{\al}{1+p(\al-1)}.
\]
Then $Q(\al,p)^{1/(p-1)} = Q(\beta(\al),p/(p-1))$.
\end{lem}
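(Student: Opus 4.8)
The plan is to reduce everything to the single auxiliary quantity $D:=1+p(\al-1)$, which is strictly positive precisely because $\al>1-1/p$; this positivity is what makes both $Q(\al,p)$ and $\beta(\al)$ well defined, and $D$ is the only denominator that will appear throughout.

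First I would record the elementary identity for $\beta(\al)-1$. Writing $\beta(\al)=\al/D$, one gets
\[
\beta(\al)-1=\frac{\al-D}{D}=\frac{\al-1-p(\al-1)}{D}=-\frac{(p-1)(\al-1)}{D}.
\]
Multiplying by $p/(p-1)$ and adding $1$ then collapses the denominator of $Q(\,\cdot\,,p/(p-1))$ at $\beta(\al)$:
\[
1+\frac{p}{p-1}\bigl(\beta(\al)-1\bigr)=1-\frac{p(\al-1)}{D}=\frac{D-p(\al-1)}{D}=\frac{1}{D}.
\]

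Next I would substitute into the definition~\eqref{eq-def-Qalp} with exponent $p/(p-1)$, using $p/(p-1)-1=1/(p-1)$:
\[
Q\Bigl(\beta(\al),\tfrac{p}{p-1}\Bigr)
=\frac{\beta(\al)^{p/(p-1)}}{1+\frac{p}{p-1}(\beta(\al)-1)}
=\Bigl(\frac{\al}{D}\Bigr)^{p/(p-1)}D
=\frac{\al^{p/(p-1)}}{D^{1/(p-1)}}.
\]
On the other hand, directly from~\eqref{eq-def-Qalp},
\[
Q(\al,p)^{1/(p-1)}=\Bigl(\frac{\al^p}{D}\Bigr)^{1/(p-1)}=\frac{\al^{p/(p-1)}}{D^{1/(p-1)}},
\]
and the two expressions coincide, which is the asserted identity. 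A final bookkeeping point I would include is that $\beta(\al)$ genuinely lies in the admissible range for the exponent $p/(p-1)$, i.e.\ $\beta(\al)>1-(p-1)/p=1/p$: this is equivalent to $p\al>D$, hence to $p>1$, so the right-hand side $Q(\beta(\al),p/(p-1))$ is legitimately given by~\eqref{eq-def-Qalp}.

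There is no serious obstacle here; the only things to watch are the exponent arithmetic $p/(p-1)-1=1/(p-1)$ and invoking the positivity of $D$ (equivalently $\al>1-1/p$) so that all the fractional powers are of positive quantities.
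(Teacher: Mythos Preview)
Your proof is correct and is precisely the ``direct calculation'' the paper alludes to without spelling out; the paper gives no further details beyond that remark. The auxiliary substitution $D=1+p(\al-1)$ and the exponent bookkeeping you carry out are exactly what is needed, and your verification that $\beta(\al)>1/p$ is a nice extra check the paper leaves implicit.
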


\begin{remark}
Note that $\beta(\al)=1$ if and only if $\al=1$, and that
\[
\beta(\al) \to \left\{ \begin{array}{ll}
              \frac1p & \text{as } \al\to\infty,\\
              \infty   & \text{as } \al\to 1-\frac1p.
              \end{array} \right.
\]
Thus, Lemma~\ref{lem-al-be} provides us with an explicit one-to-one
correspondence between power-type quasiminimizers associated with $Q$ and $p$,
as in~\eqref{eq-def-Qalp},  and 
those associated in the same way  with $Q'=Q^{1/(p-1)}$ and $p'=p/(p-1)$.
Namely, if $1-1/p<\al\le1\le\albar$ correspond to $Q$ and $p$, and  
$1/p<\be\le1\le\bebar$ correspond to $Q'$ and $p'$, 
then we have 
\[
\be = \frac{\albar}{1+p(\albar-1)}
\quad \text{and} \quad
\bebar = \frac{\al}{1+p(\al-1)},
\]
and conversely, 
\[
\al = \frac{\bebar}{1+p'(\bebar-1)}
\quad \text{and} \quad
\albar = \frac{\be}{1+p'(\be-1)}.
\]
Note that $Q$, $p$ and $Q'$, $p'$  are dual in the sense that
$p=p'/(p'-1)$ and $Q=(Q')^{1/(p'-1)}$.

\end{remark}

\begin{proof}[Proof of Theorem~\ref{thm-regular-intro}]
Let $\be$ be as in Corollary~\ref{cor-beta-p1}.
By Theorem~\ref{thm-regular} and Corollary~\ref{cor-beta-p1}, 
the condition~\eqref{eq-div-wiener-sum}
is sufficient for regularity when
\[
\frac{p}{p-1}<s<\frac{1}{1-\be},
\]
which is equivalent to $1/\be<s/(s-1)<p$.
This in turn means that 
\[
0<\de=p-\frac{s}{s-1} < p-\frac{1}{\be} = \frac{p\be-1}{\be},
\]
and it follows that Theorem~\ref{thm-regular} is true for any exponent
\[
\frac{1}{\de} > \frac{\be}{p\be-1}.
\]
Finally, by Lemma~\ref{lem-al-be} we have
\begin{equation}   \label{eq-be-to-albar}
\frac{\be}{p\be-1} 
= \frac{\frac{\albar}{1+p(\albar-1)}}{\frac{p\albar}{1+p(\albar-1)}-1}
= \frac{\albar}{p-1},
\end{equation}
which finishes the proof.
\end{proof}

\begin{remark}
For $p=2$ it is easy to determine $\al$ in terms of $Q=Q(\al,p)$,
namely
\[
\alp=\bigl(Q \pm \sqrt{Q^2-Q}\bigr).
\]
For general $p>1$ this can be done numerically. 
However, noting that 
\[
\albar \le 1+p(\albar-1) < p\albar, 
\] 
we easily obtain the following estimate 
\[
Q^{1/(p-1)}\le\albar<(pQ)^{1/(p-1)}.
\]
This in particular shows that Theorems~\ref{thm-regular-intro}, 
\ref{thm-iter-pot-est} and~\ref{thm-regular} 
hold with the exponent 
\[
\frac1\de = \frac{(pQ)^{1/(p-1)}}{p-1}.
\]
This is more explicit than $1/\de=\albar/(p-1)+\eps$ but not sharp and the 
asymptotics as $Q\to1$ is not correct.
\end{remark}

\section{Sharpness of the capacitary estimates}
\label{sect-sharpness}

In this section we show that the exponent $1/\de$ in 
Theorems~\ref{thm-Olli} and~\ref{thm-iter-pot-est} is
sharp up to possibly the endpoint 
$p_1:=p_1(\frac{p}{p-1},Q^{1/p})$ for $s$.

For this, we shall use the following result from 
Bj\"orn--Bj\"orn~\cite[Theorem~5.1]{BB-power}.
Note, however, that the exponent $\al$ therein corresponds to $-\ga$ 
below and that, contrary to Theorem~\ref{thm-qmin-power-p>n}, 
here we consider $1<p<n$ and negative powers $|x|^{-\ga}$.

\begin{thm} \label{thm-qmin-power}
Let\/ $1<p<n$.
Then $|x|^{-\ga}$ is a quasiminimizer in\/ $B(0,1) \setm \{0\}\subset\R^n$
if and only if $\ga > n/p-1$ or $\ga=0$.
Moreover, if $\ga >n/p-1$, then
\begin{equation} \label{eq-def-Qalpn}
\biggl(\frac{p-1}{n-p}\biggr)^{p-1} \frac{\ga^p}{p\ga-(n-p)}
\end{equation}
is the best quasiminimizer constant for $|x|^{-\ga}$.
\end{thm}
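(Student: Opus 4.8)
The plan is to prove Theorem~\ref{thm-qmin-power} by reducing it to the radial case and an explicit Euler--Lagrange/comparison computation, paralleling the $p>n$ situation in Theorem~\ref{thm-qmin-power-p>n}. First I would observe that $u(x)=|x|^{-\ga}$ lies in $\Wp\loc(B(0,1)\setm\{0\})$ precisely when $\ga<(n-p)/1$ is not the relevant constraint here --- rather the gradient $|\grad u|=\ga|x|^{-\ga-1}$ is in $L^p\loc$ away from the origin for every $\ga$, so the issue of membership is only near $0$; but since quasiminimality is tested against $\phi\in\Wp_0(B(0,1)\setm\{0\})$, which vanish near $0$, this is automatic. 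The case $\ga=0$ is the constant function, trivially a $1$-quasiminimizer. So assume $\ga\ne0$.

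\medskip

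The core computation is radial. For a radial competitor, writing $u=g(|x|)$ with $g(\rho)=\rho^{-\ga}$, the $p$-energy over an annulus $\{a<|x|<b\}$ is $\om_{n-1}\int_a^b |g'(\rho)|^p \rho^{n-1}\,d\rho$, and one checks that $\rho\mapsto\rho^{-\ga}$ solves the one-dimensional weighted $p$-Laplace equation $(\rho^{n-1}|g'|^{p-2}g')'=0$ exactly (any power $\rho^{t}$ with $t(p-1)+1 = n-p+1$... ) --- more precisely $g=\rho^{-\ga}$ with $-\ga = (p-n)/(p-1)$ is the $p$-harmonic radial profile, and for other $\ga$ it is only a quasiminimizer. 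The key step is then to identify the sharp constant: one compares the energy of $u$ on an annulus with the energy of the radial $p$-harmonic replacement having the same boundary values, and among radial competitors the worst ratio is computed by a scaling argument (the ratio is scale-invariant, so it suffices to evaluate it on $\{1<|x|<R\}$ and optimize over $R$, or equivalently to linearize). This yields the expression $\bigl(\frac{p-1}{n-p}\bigr)^{p-1}\frac{\ga^p}{p\ga-(n-p)}$, and in particular shows it is finite and $\ge1$ exactly when $p\ga>n-p$, i.e.\ $\ga>n/p-1$, which pins down the admissibility range.

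\medskip

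The step that needs real care --- and the main obstacle --- is passing from radial competitors to \emph{arbitrary} competitors $\phi\in\Wp_0(B(0,1)\setm\{0\})$, i.e.\ showing the radial computation actually gives the correct (and attained) quasiminimizing constant, not merely a lower bound for it. The standard device is symmetrization: replace a general competitor $u+\phi$ by its spherical average (or a suitable rearrangement) $\rho\mapsto \fint_{\partial B(0,\rho)}(u+\phi)\,dS$, using that averaging does not increase the $p$-energy (Jensen's inequality applied to the radial derivative, together with the fact that the tangential part of the gradient only helps). One must check that this averaging operation maps $\Wp_0$ competitors to radial $\Wp_0$ competitors with controlled energy and the same boundary data on the relevant sphere, and that it interacts correctly with the set $\{\phi\ne0\}$ appearing in~\eqref{eq-def-q-min-intro}; some localization to annuli $\{2^{-k-1}<|x|<2^{-k}\}$ and summation is the clean way to handle the $\{\phi\ne0\}$ bookkeeping and the singularity at the origin simultaneously. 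Once the problem is reduced to a single annulus and radial functions, the sharp constant and its attainment follow from the explicit one-variable minimization above, and the "only if" direction (non-quasiminimality when $0<\ga\le n/p-1$) follows because the radial energy ratio then becomes unbounded, which one exhibits by letting the annulus degenerate. I would also remark that this result is quoted verbatim from Bj\"orn--Bj\"orn~\cite[Theorem~5.1]{BB-power}, so in the present paper the "proof" is merely this citation together with the change of notation $\al\leftrightarrow-\ga$ noted before the statement.
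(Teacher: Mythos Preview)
You correctly identify at the end that the paper does not prove this theorem at all: it is simply quoted from \cite[Theorem~5.1]{BB-power} with the substitution $\al\mapsto-\ga$, and is used here only as a black box in the sharpness example. So as far as comparison with the paper goes, your final remark is the entire content, and the preceding sketch is superfluous for the present purpose.

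That said, a brief comment on the sketch itself: the overall architecture you outline (reduce to radial competitors, compute the sharp energy ratio on annuli explicitly, then argue that radial competitors realize the worst case) is indeed the route taken in \cite{BB-power}. Your discussion of the $\Wp\loc$ membership is garbled (the half-sentence ``precisely when $\ga<(n-p)/1$ is not the relevant constraint'' should be cleaned up), and the symmetrization step you flag as the ``main obstacle'' is genuinely delicate: spherical averaging does decrease the $p$-energy of the competitor, but it can also shrink the set $\{\phi\ne0\}$ in a way that makes the comparison with $\int_{\{\phi\ne0\}}|\grad u|^p$ go the wrong direction, so the bookkeeping you allude to really is where the work lies. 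In \cite{BB-power} this is handled by a careful choice of test annuli and an explicit optimization rather than a pure symmetrization argument. None of this affects the present paper, which only needs the statement.
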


\begin{example}
Let $1<p<n$ and $Q>1$ be fixed.
Corollary~\ref{cor-beta-p1} and~\eqref{eq-be-to-albar}
imply that 
\[
p-\frac{p_1}{p_1-1}=\frac{p-1}{\albar},
\] 
where $p_1=p_1(\frac{p}{p-1},Q^{1/p})$ is as in 
Corollary~\ref{cor-beta-p1} and
$\albar\ge1$ is the unique solution in $[1,\infty)$ of
\[
Q=\frac{\albar^{p}}{1+p(\albar-1)}.
\]
Let 
\[
\ga=\frac{\albar(n-p)}{p-1}.
\]
Then $\ga>n/p-1$ and Theorem~\ref{thm-qmin-power} implies that
$|x|^{-\ga}$ is a quasiminimizer in $B(0,1) \setm \{0\}\subset\R^n$
with the best quasiminimizer constant equal to
\begin{align*}
\biggl(\frac{p-1}{n-p}\biggr)^{p-1} \frac{\ga^p}{p\ga-(n-p)}
= \frac{\albar^p}{p\albar-(p-1)} =Q.
\end{align*}
For $x\in B(0,1)$ and sufficiently small $\eps>0$ let
\[
u_\eps(x)=\min\{\eps(|x|^{-\ga}-1),1\}
\]
and $E_\eps=B(0,\rho_\eps)$, where 
\[
\rho_\eps= \Bigl( \frac{\eps}{1+\eps} \Bigr)^{1/\ga}.
\]
Then $u_\eps$ is a $Q$-quasiminimizing potential for $E_\eps$ in $B(0,1)$.
Example~2.12 in Heinonen--Kilpel\"ainen--Martio~\cite{HeKiMa} shows
that for $r\ge 2\rho_\eps$,
\begin{equation}  \label{eq-cap-E-eps}
\cp(E_\eps,B(0,r)) \simeq \rho_\eps^{n-p} \simeq \eps^{(n-p)/\ga},
\end{equation}
where the comparison constants in $\simeq$ are independent of $\eps$ 
and $r$.
At the same time, for $B=B(0,\tfrac13)$ we have
\[
\inf_{2B} u_\eps = \eps ((\tfrac32)^\ga-1 ) 
< \eps (\tfrac32 )^\ga.
\]
Thus, by comparing this with~\eqref{eq-cap-E-eps} and letting $\eps\to0$ 
it follows that Theorem~\ref{thm-Olli} can only hold if
\begin{equation}  \label{eq-sharp-s-le-p1}
\de = p-\frac{s}{s-1} \le \frac{n-p}{\ga} 
= \frac{p-1}{\albar} = p-\frac{p_1}{p_1-1},
\end{equation}
i.e.\ for $s\le p_1$.

In Theorem~\ref{thm-iter-pot-est}, let $k\ge1$ be fixed but arbitrary.
Then for $B_j=B(0,r_j)$ with $r_j=3^{-(j+1)}$, $j=0,1,\ldots$, 
and $\eps^{1/\ga}\le r_{k+1}$, the left-hand side in~\eqref{eq-exp-est} is
\begin{equation}  \label{eq-est-inf-eps}
\inf_{B_{k+1}} u_\eps = \eps(r_{k+1}^{-\ga}-1)< \eps r_{k+1}^{-\ga}.
\end{equation}
At the same time, \eqref{eq-cap-E-eps} implies that 
the sum in the right-hand side of~\eqref{eq-exp-est} is
\[
\sum_{j=0}^{k} \biggl(
    \frac{\eps^{(n-p)/\ga}}{r_j^{n-p}}  
\biggr)^{1/\de}
= \eps^{(n-p)/\ga\de} \sum_{j=0}^{k} (r_j^{p-n})^{1/\de}
\simeq \biggl( \frac{\eps^{1/\ga}} {r_k} \biggr)^{(n-p)/\de},
\]
and hence 
\begin{align*}   
\inf_{B_{k+1}}u &\ge 1 - \exp \biggl( -c \sum_{j=0}^{k} \biggl(
    \frac{\capp(K\cap \itoverline{B}_{j+1},B_{j})}
      {r_j^{n-p}}  
\biggr)^{1/\de} \biggr) \\
&\ge 1- \biggl( 1-c' \biggl( \frac{\eps^{1/\ga}} {r_k} \biggr)^{(n-p)/\de}\biggr)
= c' \biggl( \frac{\eps^{1/\ga}} {r_k} \biggr)^{(n-p)/\de},
\end{align*}
where $c'$ (as well as the comparison constants in $\simeq$) 
can be chosen independently of $\eps$.
Comparing this with~\eqref{eq-est-inf-eps} and letting $\eps\to0$ shows
as in~\eqref{eq-sharp-s-le-p1} 
that Theorem~\ref{thm-iter-pot-est} can only hold for $s\le p_1$.
\end{example}


\begin{thebibliography}{99}
\bibitem{ABkellogg} \art{Bj\"orn, A.}
        {A weak Kellogg property for quasiminimizers}
        {Comment. Math. Helv.} {81} {2006} {809--825}

\bibitem{ABremove} \art{Bj\"orn, A.}
        {Removable singularities for bounded \p-harmonic
          and quasi\-(super)\-harmonic functions on metric spaces}
        {Ann. Acad. Sci. Fenn. Math.} {31} {2006} {71--95}

\bibitem{ABclass} \art{Bj\"orn, A.}
         {A regularity classification of boundary points
           for \p-harmonic functions and quasiminimizers}
        {J. Math. Anal. Appl.} {338} {2008} {39--47}

\bibitem{ABcluster} \art{Bj\"orn, A.}
        {Cluster sets for Sobolev functions and quasiminimizers}
        {J. Anal. Math.}{112}{2010}{49--77} 

\bibitem{BB-power} \art{Bj\"orn, A. \AND Bj\"orn, J.}
        {Power-type quasiminimizers}
        {Ann. Acad. Sci. Fenn. Math.}{36}{2011}{301--319}

\bibitem{Riikka} \artprep{Bj\"orn, A., Bj\"orn, J. \AND Korte, R.}
        {Minima of quasisuperminimizers}{in preparation}

\bibitem{BBM} \art{Bj\"orn, A., Bj\"orn, J. \AND Marola, N.}
	{$\BMO$, local integrability, Harnack and Caccioppoli 
	inequalities for quasisuperharmonic functions}
	{Ann. Inst. H. Poincar\'e Anal. Non Lin\'eaire.}
        {27}{2010}{1489--1505} 

\bibitem{BMarola} \art{Bj\"orn, A. \AND Marola, N.}
        {Moser iteration for (quasi)minimizers on metric spaces}
        {Manuscripta Math.}{121}{2006}{339--366}

\bibitem{BjMa-paste} \art{Bj\"orn, A. \AND Martio, O.}
	{Pasting lemmas and characterizations of boundary 
	regularity for quasiminimizers}
	{Results Math.} {55} {2009} {265--279}


\bibitem{Bj02}  \art{Bj\"orn, J.}
        {Boundary continuity for quasiminimizers on metric spaces}
        {Illinois J. Math.} {46} {2002} {383--403}

\bibitem{JBCalcVar} \art{Bj\"orn, J.}
        {Necessity of a Wiener type condition for boundary regularity
          of quasiminimizers and nonlinear elliptic equations}
        {Calc. Var. Partial Differential Equations}
	{35} {2009} {481--496}


\bibitem{DiBen-Gia} \artprep{DiBenedetto, E. \AND Gianazza, U.}
        {A Wiener-type condition for boundary continuity of quasi-minima 
        of variational integrals}
        {Preprint {\tt arXiv:1504.01600}}

\bibitem{DiBTru} \art{DiBenedetto, E. \AND Trudinger, N. S.}
        {Harnack inequalities for quasiminima of variational integrals}
        {Ann. Inst. H. Poincar\'e Anal. Non Lin\'eaire}{1}{1984}{295--308}

\bibitem{GG1} \art{Giaquinta, M. \AND Giusti, E.}
         {On the regularity of the minima of variational integrals}
         {Acta Math.} {148} {1982} {31--46}

\bibitem{GG2} \art{Giaquinta, M. \AND Giusti, E.}
         {Quasi-minima}
         {Ann. Inst. H. Poincar\'e Anal. Non Lin\'eaire} {1} {1984} {79--107}

\bibitem{HeKiMa} \book{Heinonen, J., Kilpel\"ainen, T.\ \AND Martio, O.}
        {Nonlinear Potential Theory of Degenerate Elliptic Equations}
        {2nd ed., Dover, Mineola, NY, 2006}

\bibitem{Ivert}  \artin{Ivert, P.-A.}
        {Continuity of quasiminima under the presence of irregular obstacles} 
        {{\it Partial Differential Equations} (Warsaw, 1984), pp.\ 155--167,
        Banach Center Publ., 19, PWN, Warsaw, 1987} 

\bibitem{judin} {\sc Judin, P. T.},
	{\it Onedimensional Quasiminimizers and Quasisuperminimizers\/
\textup{[}Yksiulotteiset Kvasiminimoijat ja Kvasisuperminimoijat\/\textup{]}},
        {Licentiate thesis, Dept. of Math., Helsinki University,
        Helsinki, 2006 (Finnish)}.

\bibitem{KilMa} \art{Kilpel\"ainen, T. \AND Mal\'y, J.}
        {The Wiener test and potential estimates for quasilinear elliptic 
        equations}
        {Acta Math.}{172}{1994}{137--161} 

\bibitem{KiKoLa} \art{Kinnunen, J., Kotilainen, M. \AND Latvala, V.}
        {Hardy-Littlewood type gradient estimates for quasiminimizers} 
        {Boll. Unione Mat. Ital.}{3}{2010}{125--136}

\bibitem{KiMaMa} \art{Kinnunen, J., Marola, N. \AND Martio, O.}
        {Harnack's principle for quasiminimizers}
        {Ric. Mat.}{56}{2007}{73--88}

\bibitem{KiMa03} \art{Kinnunen, J. \AND Martio, O.}
        {Potential theory of quasiminimizers}
        {Ann. Acad. Sci. Fenn. Math.} {28} {2003} {459--490}

\bibitem{KiSh01} \art{Kinnunen, J. \AND Shanmugalingam, N.}
        {Regularity of quasi-minimizers on metric spaces}
        {Manuscripta Math.} {105} {2001} {401--423}

\bibitem{KoMaSh} \art{Korte, R., Marola, N. \AND Shanmugalingam, N.}
        {Quasiconformality, homeomorphisms between metric measure spaces 
        preserving quasiminimizers, and uniform density property} 
        {Ark. Mat.}{50 }{2012}{111--134}

\bibitem{LatBMO} \art{Latvala, V.}
        {BMO-invariance of quasiminimizers}
        {Ann. Acad. Sci. Fenn. Math.}{29}{2004}{407--418}

\bibitem{LiMa85}  \art{Lindqvist, P. \AND Martio, O.}
        {Two theorems of N. Wiener for solutions of quasilinear elliptic 
        equations}
        {Acta Math.}{155}{1985}{153--171}

\bibitem{Maly} \art{Mal\'y, J.}
        {Positive quasiminima}
        {Comment. Math. Univ. Carolin.}{24}{1983}{681--691}

% \bibitem{martio} {\sc Martio, O.},
%         {Boundary behavior of quasiminimizers and Dirichlet finite
%           PWB solutions in the borderline case},
%         \emph{Report in Math.} {\bf 440},
%         University of Helsinki, Helsinki, 2006.

\bibitem{martioReflect} \art{Martio, O.}
        {Reflection principle for quasiminimizers}
        {Funct. Approx. Comment. Math.}{40}{2009}{165--173}

\bibitem{Martio} \art{Martio, O.}
        {Capacity and potential estimates for quasiminimizers}
        {Complex Anal. Oper. Theory}{5}{2011}{683--699}

\bibitem{MarRic1} \art{Martio, O.}
        {Quasilinear Riccati type equations and quasiminimizers}
        {Adv. Nonlinear Stud.}{11}{2011}{473–482}

\bibitem{MarRic2} \art{Martio, O.}
        {Quasiminimizing properties of solutions to Riccati type equations}
        {Ann. Sc. Norm. Super. Pisa Cl. Sci.}{12}{2013}{823--832} 

% \bibitem{martioLiU09} {\sc Martio, O.},
%         {Quasiminimizers -- definitions, constructions and
% 	capacity estimates},
% 	lectures held at the conference
% 	\emph{Nonlinear problems for $\Delta_p$ and $\Delta$},
% 	Link\"opings universitet, Link\"oping, 2009.\\
% 	{\tt http://www.mai.liu.se/TM/conf09/martio.pdf}

\bibitem{MaSb} \art{Martio, O. \AND Sbordone, C.}
	{Quasiminimizers in one dimension: integrability of the derivative,
	inverse function and obstacle problems}
	{Ann. Mat. Pura Appl.} {186} {2007} {579--590}

\bibitem{Mazya70}  \art{Maz{\cprime}ya, V. G.}
        {On the continuity at a boundary point of solutions of quasi-linear
        elliptic equations}
        {Vestnik Leningrad. Univ. Mat. Mekh. Astronom.}
        {25{\rm:13}} {1970} {42--55}  (Russian).
        English transl.: {\it Vestnik Leningrad Univ. Math.}
        {\bf 3} (1976), 225--242.

\bibitem{tolksdorf} \art{Tolksdorf, P.}
        {Remarks on quasi(sub)minima}
        {Nonlinear Anal.} {10} {1986} {115--120}

\bibitem{Trudinger} \art{Trudinger, N. S.}
        {On Harnack type inequalities and their application to quasilinear
        elliptic equations}
        {Comm. Pure Appl. Math.}{20}{1967}{721--747}

\bibitem{uppman} {\sc Uppman, H.},
	{\it The Reflection Principle for One-dimensional Quasiminimizers},
	  Master's  thesis, Link\"opings universitet,
	  Link\"oping, 2009. \\
	{\tt http://urn.kb.se/resolve?urn=urn:nbn:se:liu:diva-19162}

\bibitem{Wiener24} \art{Wiener, N.}
        {The Dirichlet problem}
        {J. Math. Phys.}{3}{1924}{127--146}


\bibitem{Ziem86} \art{Ziemer, W. P.}
        {Boundary regularity for quasiminima}
        {Arch. Rational Mech. Anal.}{92}{1986}{371--382}

\end{thebibliography}
\end{document}